\documentclass[12pt,oneside,reqno]{amsart}
\usepackage{amssymb}
\usepackage{amsmath}
\usepackage{mathrsfs}
\usepackage{hyperref}
\usepackage{amsthm}
\usepackage{amsbsy}
\usepackage{psfrag}
\usepackage{tikz}
\usepackage{caption}
\usepackage{subcaption}
\usepackage{pstricks}
\usepackage{graphics}
\usepackage{graphicx}
\usepackage{float}
\usepackage{inputenc}
\setlength{\textheight}{8.1truein}
\setlength{\textwidth}{6.5truein}
\setlength{\oddsidemargin}{-0.06in}
\setlength{\evensidemargin}{-0.06in} \headheight .4truein
\hfuzz=11pt
\theoremstyle{plain}
\newtheorem{theorem}{Theorem}[section]
\newtheorem{lemma}[theorem]{Lemma}

\newtheorem{example}[theorem]{Example}

\theoremstyle{remark}
\newtheorem{remark}[theorem]{Remark}

\begin{document}
\allowdisplaybreaks[4]
\numberwithin{figure}{section}
\numberwithin{table}{section}
 \numberwithin{equation}{section}
%
\title[DG Method for Fourth Order DBCP and Alternative Error Analysis]{On $C^0$ Interior Penalty Method for Fourth Order Dirichlet Boundary Control Problem and a New Error Analysis for Fourth Order Elliptic  Equation with Cahn-Hilliard Boundary Condition}

\author[S. Chowdhury]{Sudipto Chowdhury}
\address{Department of Mathematics, The LNM Institute of Information Technology Jaipur, Rajasthan - 302031.}
\email{sudipto.choudhary@lnmiit.ac.in}

\author[D. Garg]{Divay Garg}
\address{Department of Mathematics, Indian Institute of Technology Delhi, New Delhi - 110016.}
\email{divaygarg2@gmail.com}

\author[R. Shokeen]{Ravina Shokeen}
\address{Department of Mathematics, The LNM Institute of Information Technology Jaipur, Rajasthan - 302031.}
\email{ravinashokeen@outlook.com}
\begin{abstract}
 In this paper, we revisit the $L_2$-norm error estimate for $C^0$-interior penalty analysis of Dirichlet boundary control problem governed by biharmonic operator. In this work, we have relaxed the interior angle condition of the domain from $120$ degrees to $180$ degrees, therefore this analysis can be carried out for any convex domain. The theoretical findings are illustrated by numerical experiments. Moreover, we propose a new analysis to derive the error estimates for the biharmonic equation with Cahn-Hilliard type boundary condition under minimal regularity assumption. 
\end{abstract} 
\keywords{Optimal control, $C^0$-IP method, Dirichlet boundary control, A priori error estimates, Cahn-Hilliard boundary condition, Biharmonic equation, Finite element method}

\subjclass{65N30, 65N15}
\maketitle
\allowdisplaybreaks
\def\R{\mathbb{R}}
\def\cA{\mathcal{A}}
\def\cK{\mathcal{K}}
\def\cN{\mathcal{N}}
\def\p{\partial}
\def\O{\Omega}
\def\bbP{\mathbb{P}}
\def\cV{\mathcal{V}}
\def\cM{\mathcal{M}}
\def\cT{\mathcal{T}}
\def\cE{\mathcal{E}}
\def\bF{\mathbb{F}}
\def\bC{\mathbb{C}}
\def\bN{\mathbb{N}}
\def\ssT{{\scriptscriptstyle T}}
\def\HT{{H^2(\O,\cT_h)}}
\def\mean#1{\left\{\hskip -5pt\left\{#1\right\}\hskip -5pt\right\}}
\def\jump#1{\left[\hskip -3.5pt\left[#1\right]\hskip -3.5pt\right]}
\def\smean#1{\{\hskip -3pt\{#1\}\hskip -3pt\}}
\def\sjump#1{[\hskip -1.5pt[#1]\hskip -1.5pt]}
\def\jumptwo{\jump{\frac{\p^2 u_h}{\p n^2}}}

\section{Introduction}\label{intro}
Let $\Omega\subset\mathbb{R}^{2}$ be a bounded polygonal domain and $n$ denotes the outward unit normal vector to the boundary $\partial\Omega$ of $\Omega$. We assume the boundary $\partial \Omega$ to be the union of line segments $\Gamma_{k}(1\leq k\leq l)$ such that their interiors are pairwise disjoint in the induced topology. Consider the following optimal control problem:
\begin{equation}\label{model_problem}
    \min_{p\in Q} J(u,p):=\frac{1}{2}\|u-u_d\|^2+\frac{\alpha}{2}| p|^2_{H^2(\Omega)},
\end{equation}
 subject to 
 \begin{equation}\label{b2}
     \begin{split}
       \Delta^2u &=f \ \ \text{in}\ \  \Omega, \\
     \hspace{4mm}   u &=p \ \ \text{on} \ \ \partial \Omega,\\
     \partial u/\partial n&=0\ \ \text{on} \ \ \partial \Omega.
     \end{split}
 \end{equation}
Here $\alpha>0,\,f \,\text{and}\, u_{d}\in L_{2}(\Omega)$ denote the regularization parameter, the external force acting on the system and desired observation respectively. The space of admissible controls is given by
\begin{align*}
Q=\{p\in H^2(\Omega):~\partial p/\partial n=0 \;\text{on}\;\ \partial\Omega\}.
\end{align*}
%
This article revisits the $L_{2}$-norm estimate for the optimal control derived in \cite{CHOWDHURYDGcontrol2015}. The analysis therein uses the fact that interior angles of the domain cannot exceed $120$ degrees, which is quite restrictive in applications.
This article extends the analysis to any convex polygonal domains. But the extension is non trivial in nature. The main novelties of this article are shortlisted below:
\begin{itemize}
	\item In Lemma \ref{lemma:importance} we have proven the equality of two bilinear forms over a special class of Sobolev functions. It involves novel functional analytic techniques. This Lemma plays a pivotal role in the analysis.
	
	\item In Section \ref{sec:aa} we have proposed a novel variational formulation for the problem \eqref{ch_eqn} over a new Hilbert space compared to the traditional test space.
	
	\item  Lemma \ref{regularity_result} proves a special regularity result for the solution of \eqref{ch_eqn} which provides the consistency of the solution \eqref{cc} and hence Galerkin orthogonality \eqref{go}.
\end{itemize}

%
\par
Classical non-conforming methods and $C^0$-interior penalty (IP) methods have been two popular schemes to approximate the solutions of higher order equations within the finite element framework. In this connection, we refer to the works of \cite{32b1977,Gunzberger1991Stokes,EGHLMT2002DG3D,BSung2005DG4,40msb2007,31sm2007,BGS2010AC0IP,Gudi2010NewAnalysis,BNelian2010C0IPSingular,GN2011Sixth,Hu2012Morley,30ms2003,Carsten2013CR,BCGG2014IMA} and references therein. These methods are computationally more efficient compared to the one of conforming finite element methods. For the interested readers, we refer to \cite{38gnp2008} for a discontinuous mixed formulation based analysis of fourth order problems. In this regard, we would like to remark that mixed schemes are complicated in general and have its restrictions (solution to the discrete scheme may converge to a wrong solution for a fourth order problem  if the solution is not $H^3$-regular).   \par
We notice that the literature for the finite element error analysis for higher order optimal control problems is relatively less. In \cite{wollner2012MixedBH}, a mixed finite element (Hermann-Miyoshi mixed formulation) analysis is proposed for a fourth order interior control problem. In this work, an optimal order $a\; priori$ error estimates for the optimal control, state and adjoint state are derived followed by a superconvergence result for the optimal control. For a $C^0$-interior penalty method based analysis of a fourth order interior control problem, we refer to \cite{Gudi2014Control}. Therein an optimal order $a\; priori$ error estimate and a superconvergence result is derived for the optimal control on a general polygonal domain and subsequently a residual based $a\; posteriori$ error estimates are derived for the construction of an efficient adaptive algorithm. In \cite{Gudi2014DGControl}, abstract frameworks for both $a\; priori$ and $a\; posteriori$ error analysis of fourth order interior and Neumann boundary control problems are proposed. The analysis of this paper can be applicable for second and sixth order problems as well.  \\
We continue our discussion on higher order Dirichlet boundary control problems. In this connection, we note that the analysis of Dirichlet boundary control problem is more subtle compared to interior and Neumann boundary control problems. This is due to the fact that the control does not appear naturally in the formulation for Dirichlet boundary control problems. We refer to \cite{CHOWDHURYDGcontrol2015} for the $C^{0}$-interior penalty analysis of an energy space based fourth order Dirichlet boundary control problem where the control variable is sought from the energy space $H^{3/2}(\partial\Omega)$ (the definition of the space $H^{3/2}(\partial\Omega)$ is given in Section \ref{sec:model-problem1}). In this work, an optimal order {\em a priori} energy norm error estimate is derived and subsequently an optimal order $L_2$-norm error estimate is derived with the help of a dual problem. But the $L_{2}$-norm error estimate is derived under the assumption that interior angles of the domain should be less than $120$ degrees. This assumption is required to guarantee $H^{5/2+\epsilon}(\Omega)$ regularity for the optimal control \cite{CHOWDHURYDGcontrol2015}.

In this work, we revisit this estimate and extend the angle condition to $180$ degrees. Moreover, we propose an alternative error analysis for the solution of bi-harmonic equation with Cahn-Hilliard boundary equation.

%
%
The rest of the article is organized as follows. In Section \ref{sec:discretization}, we introduce the $C^0$-interior penalty method and define some general notations and concepts which are important in subsequent discussions.  
We start Section \ref{sec:model-problem1} by showing the equality of two bilinear forms over the space of admissible controls $Q$ which plays a crucial role in establishing the $L_{2}$-norm error estimate for the optimal control. Subsequently, we derive the optimality system for the model problem \eqref{model_problem}-\eqref{b2} and its equivalence with the corresponding energy space
based Dirichlet boundary control problem. We conclude this section with the discrete optimality system.  Section \ref{error analysis:energy} contains the optimal order energy norm estimates for the optimal control, state and adjoint state variables. We derive the optimal order $L_2$-norm estimate for the optimal control variable in  Section \ref{error analysis:L^{2}}. In Section \ref{sec:aa}, we propose an alternative approach for the $a\;priori$ error analysis of $C^{0}$-interior penalty approximation of stationary Cahn-Hilliard equation under minimal regularity assumption. Section \ref{sec:Numerical Examples} is devoted to the numerical experiment to verify the theoretical findings.
We conclude the article with Section \ref{sec:Conclusions3}.

We follow the standard notion of spaces and operators that can be found in \cite{Ciarlet1978FEM,Grisvard1985Singularities,BScott2008FEM}. If $S\subset \bar{\Omega}$ then the space of all square integrable functions defined over $S$ is denoted by  $L_{2}(S)$. When $m>0$ is an integer, the space of $L_{2}(S)$  functions whose distributional derivative upto $m$-th order is in $L_{2}(S)$ is denoted by $H^{m}(S)$ . If $s>0$ is not an integer then there exists an integer $m>0$ such that $m-1<s<m$. There $H^{s}(S)$ denotes the space of all $H^{m-1}(S)$ functions which belong to the fractional order Sobolev space $H^{s-m+1}(S)$.  When $S=\Omega$, then the $L_{2}(\Omega)$ inner product is denoted either by $(\cdot,\cdot)$ or by its usual integral representation and $L_{2}(\Omega)$ norm is denoted by $\|\cdot\|$, else it is denoted by $\|\cdot\|_{S}$. In this context, we mention that $H^{-s}(S)$ denotes the dual of $H^{s}_{0}(S)$ and this duality is denoted by $\langle\cdot,\cdot\rangle_{-s,s,S}$ for positive real $s$.
\section{Quadratic $C^{0}$-Interior Penalty Method}\label{sec:discretization}
We introduce the $C^0$-interior penalty method in brief for the model problem \eqref{model_problem}-\eqref{b2}.
Let $\mathcal{T}_{h}$ be a simplicial, regular triangulation of
$\Omega$ \cite{Ciarlet1978FEM}. A generic triangle in this triangulation and its diameter are denoted by $T$ and $h_{T}$ respectively and its maximum over the triangles is called the mesh discretization parameter which is denoted by $h$.
The finite element spaces are given by
\begin{eqnarray*}
&V_{h}=\{v_{h}\in H^{1}_{0}(\Omega): v_{h}|_{T}\in P_{2}(T)\;\forall\, T\in \mathcal{T}_{h}\},\label{3:1}\\
&Q_{h}=\{p_{h}\in H^{1}(\Omega): p_{h}|_{T}\in P_{2}(T)\;\forall \,T\in \mathcal{T}_{h}\},\label{3:2}
\end{eqnarray*}
where $P_{2}(T)$ denotes the space of polynomials of degree less than or equal to two on $T$. Sides or edges of a triangle and their lengths are denoted by $e$ and $|e|$ respectively. Set of all edges in a triangulation are denoted by $\mathcal{E}_{h}$. An edge shared by two triangles is called an interior edge otherwise boundary edge. The set of all interior and boundary edges are denoted by $\mathcal{E}^{i}_{h}$ and $\mathcal{E}^{b}_{h}$ respectively.
Any $e\in\cE_h^i$ can be written as
$e=\partial T_+\cap\partial T_-$ for two adjacent triangles $T_{+}$ and $T_{-}$. Let
$n_-$ represents the unit normal vector on $e$ pointing from $T_-$ to $T_+$
and set $n_+=-n_-$. For $s>0$, define $H^{s}(\Omega,\cT_h)$ by
\begin{align*}
H^{s}(\Omega,\cT_h)=\{v\in L_{2}(\Omega): v|_{T}\in H^{s}(T)\;\forall \, T\in \mathcal{T}_{h}\}.
\end{align*}
For $v\in H^2(\Omega,\cT_h)$, the jump of normal
derivative of $v$ across $e$ is given by
\begin{equation*}
 \sjump{\partial v/\partial n} =
 \left. \nabla v_+\right|_{e}\cdot n_+ +
 \left. \nabla v_-\right|_{e}\cdot n_-,
\end{equation*}
 where $v_\pm=v\big|_{T_\pm}$.
 Also, for all $v$ with $\Delta v\in H^1(\O,\cT_h)$, its average and jump across $e$ are given by
\begin{equation*}
 \smean{\Delta v} =
 \frac{1}{2}\left(\Delta v_{+}
 +\Delta v_{-} \right),
\end{equation*}
and
\begin{equation*}
 \sjump{\Delta v} =
 \left(\Delta v_{+}
 -\Delta v_{-}\right),
\end{equation*}
respectively.

\par
For the convenience of notation, we extend the definition of average and jump on the boundary edges also. When $e\in \cE_h^b$, there is only one triangle $T$ sharing it. Let $n_e$ denotes the unit outward normal on $e$.
For any $v\in H^2(T)$, we set on $e$
\begin{equation*}
 \sjump{\partial v/\partial n}= \nabla v \cdot n_e,
\end{equation*}
 and for any $v$ with $\Delta v\in H^{1}(T)$,
\begin{equation*}
 \smean{\Delta v}=\Delta v.
\end{equation*}
With the help of the above defined quantities, we define the following mesh dependent bilinear forms, semi-norms and norms which are used in the subsequent analysis.\\
The discrete bilinear form $a_{h}(\cdot,\cdot)$ defined on $Q_{h}\times Q_{h}$ is given by
\begin{align*}
a_{h}(p_{h},r_{h})=&\sum_{T\in \mathcal{T}_{h}}\int_{T}\Delta p_{h} \Delta r_{h} dx+\sum_{e\in \mathcal{E}_{h}}\int_{e}\smean{\Delta p_{h}}\sjump{\partial r_{h}/\partial n_{e}}\;ds\notag\\
&\quad +\sum_{e\in \mathcal{E}_{h}}\int_{e}\smean{\Delta
r_{h}}\sjump{\partial p_{h}/\partial n_{e}}\;ds
+\sum_{e\in
\mathcal{E}_{h}}\frac{\sigma}{|e|}\int_{e}\sjump{\partial
p_{h}/\partial n_{e}}\sjump{\partial r_{h}/\partial
n_{e}}\;ds,\label{3:3}
\end{align*}
where the penalty parameter $\sigma\geq 1$.\\
Define the discrete energy norm on
$V_{h}$ by
\begin{eqnarray}
\|v_{h}\|^{2}_{h}=\sum_{T\in \mathcal{T}_{h}}\|\Delta v_{h}\|_{T}^{2}+\sum_{e\in \mathcal{E}_{h}}\frac{\sigma}{|e|}\|\sjump{\partial v_{h}/\partial n_{e}}\|_{e}^{2}\ \ \;\forall \,v_{h}\in V_{h}.\label{3:4}
\end{eqnarray}
Note that \eqref{3:4} defines a norm on $V_{h}$ whereas it
is only a semi-norm on $Q_{h}$ (see \cite{BGGS2012CHC0IP}). The energy norm $|\|\cdot\||_{h}$ on $Q_{h}$ is defined by
\begin{equation}\label{3:5}
|\|p_{h}\||^{2}_{h}=\|p_{h}\|^{2}_{h}+\|p_{h}\|^{2}\ \ \;\forall \,p_{h}\in Q_{h}.
\end{equation}
In the derivation of the $L_{2}$-norm error estimate, we need an additional energy norm [resp. semi-norm] $\|\cdot\|_{Q_{h}}$  on $V_{h}$ [resp. $Q_{h}$], \cite{BGGS2012CHC0IP}. It is defined as follows
\begin{equation*}
\|p_{h}\|^{2}_{Q_{h}}=\sum_{T\in
\mathcal{T}_{h}}\|\Delta p_{h}\|^{2}_{T}+\sum_{e\in
\mathcal{E}_{h}}|e| \|\smean{\Delta p_{h}}\|_{e}^{2}+\sum_{e\in
\mathcal{E}_{h}}\frac{\sigma}{|e|}\|\sjump{\partial p_{h}/\partial
n_{e}}\|_{e}^{2}\ \ \;\forall\  p_{h}\in Q_{h}.\label{3:6}
\end{equation*}
By the use of trace inequality \cite[Section 1.6]{BScott2008FEM},
we observe that $\|\cdot\|_{Q_{h}}$ and $\|\cdot\|_{h}$ are equivalent semi-norms [resp. norms] on $Q_{h}$ [resp. $V_{h}$].\\
Moreover, from the discussions of \cite{BSung2005DG4}, it follows that $a_{h}(\cdot,\cdot)$ is coercive and bounded on $Q_{h}$ with respect to $\|\cdot\|_{h}$, $i.\;e.$, there exist positive constants $c, C>0$ independent of $h$ such that
\begin{eqnarray*}
a_h(p_h,p_h)\geq c\|p_h\|^2_h \ \ \;\forall\, p_h\in Q_h,
\end{eqnarray*}
\begin{eqnarray*}
|a_h(\psi_{h},\eta_{h})|\leq C\|\psi_{h}\|_h\|\eta_{h}\|_h \ \ \;\forall\, \psi_{h}, \eta_{h}\in Q_h.
\end{eqnarray*}
For $f\in L_{2}(\Omega)$, $p_{h}\in Q_{h}$, let
$v_{h}(f,p_{h})\in V_{h} $ be the unique solution of the following
equation
\begin{equation*}
a_{h}(v_{h}(f,p_{h}),w_{h})=(f,w_{h})-a_{h}(p_{h},w_{h})\ \ \;\forall \, w_{h}\in V_{h}.
\end{equation*}
From now onwards, we denote by $C$ a generic positive constant that is independent of the mesh parameter $h$.
\subsection{Enriching Operator}\label{eo}

Let $W_{h}$ be the Hsieh-Clough-Tocher macro finite element space
associated with the triangulation $\mathcal{T}_{h}$, \cite{Ciarlet1978FEM}.
Define $\tilde{Q}_{h}$ by
\begin{align*}
\tilde{Q}_{h}=Q\cap W_{h}.
\end{align*}
We assume there exists a smoothing operator $E_{h}: Q_{h}\rightarrow\tilde{Q}_{h}$ which is also known as enriching operator satisfying the following approximation property. We refer to \cite{BGGS2012CHC0IP} for a detailed discussion on the definition and proof of the following lemma.

%
%
%
\begin{lemma}\label{lem:EnrichApprx} Let $v \in Q_h$, there hold
\begin{align*}
\sum_{T \in
\cT_h}\left(h_T^{-4}\|E_hv-v\|^2_{T}+h_T^{-2}\|\nabla(E_hv-v)\|^2_{T}\right)&\leq
C\sum_{e \in \cE_h}\frac{1}{|e|}\Bigg\|\jump{\frac{\p v}{\p
n}}\Bigg\|^2_{e} 
\end{align*}
and
\begin{align*}
\sum_{T \in \cT_h}|E_hv-v|^2_{H^2(T)}\leq C\sum_{e \in
\cE_h}\frac{1}{|e|}\Bigg\|\jump{\frac{\p v}{\p
n}}\Bigg\|^2_{e}.
\end{align*}
\end{lemma}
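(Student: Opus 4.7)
The plan is to work element-by-element using the classical dof-based comparison together with a scaling argument on the reference triangle. Fix $T \in \cT_h$ and set $w := E_h v - v|_T$. Since $v|_T \in P_2(T)$ lies inside the Hsieh--Clough--Tocher macro polynomial space on $T$, so does $w$, and that space is finite dimensional and unisolvent with respect to vertex values, vertex gradients, and edge-midpoint normal derivatives. Pulling back to the reference triangle and using the equivalence of norms on this finite-dimensional space together with an affine scaling yields
\begin{equation*}
\|w\|_T^2 + h_T^2 |w|_{H^1(T)}^2 + h_T^4 |w|_{H^2(T)}^2 \lesssim h_T^2 \sum_{v_i \in \overline T} \bigl(|w(v_i)|^2 + h_T^2 |\nabla w(v_i)|^2\bigr) + h_T^4 \sum_{e \subset \partial T} \bigl|\tfrac{\p w}{\p n}(m_e)\bigr|^2,
\end{equation*}
where $m_e$ denotes the midpoint of $e$. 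Dividing through by $h_T^4$ recovers the three scalings that appear on the left-hand side of the lemma.

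Next I would estimate each dof of $w$ in terms of jumps of $\p v/\p n$. The vertex-value dofs vanish because $v$ is single-valued at every vertex and $E_h v$ inherits the same value by construction. For an edge-midpoint normal-derivative dof on an interior edge $e = \p T_+ \cap \p T_-$ with $T = T_+$, the averaging prescription gives $\tfrac{\p w}{\p n}(m_e) = \pm \tfrac12 \sjump{\p v/\p n}(m_e)$; on boundary edges the condition $\p E_h v/\p n = 0$ built into $\tilde Q_h$ makes $\tfrac{\p w}{\p n}(m_e)$ equal to $-\sjump{\p v/\p n}(m_e)$ under the boundary convention introduced in the paper. The remaining and most delicate dof is the vertex gradient. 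At an interior vertex $v_i$ shared cyclically by triangles $T_1, \dots, T_N$ with $T_1 = T$, I would write $\nabla E_h v(v_i) - \nabla v|_T(v_i)$ as a weighted average of the differences $\nabla v|_{T_j}(v_i) - \nabla v|_T(v_i)$; walking around $v_i$ edge by edge, each such difference telescopes into a sum of point values of $\sjump{\nabla v}$ on the intervening edges, and because $v \in H^1(\Omega)$ the tangential component of that vector jump vanishes, so $|\sjump{\nabla v}|$ is pointwise controlled by $|\sjump{\p v/\p n}|$. Boundary and corner vertices need a parallel but slightly different accounting that uses the specific boundary-dof prescription for $E_h v$.

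Finally, each point value appearing above is converted to an edge $L^2$ norm by the one-dimensional inverse estimate $|\phi(x_0)|^2 \lesssim |e|^{-1}\|\phi\|_e^2$ valid for polynomials $\phi$ on $e$ and any $x_0 \in \overline e$, which turns the dof bounds into expressions of the form $|e|^{-1}\|\sjump{\p v/\p n}\|_e^2$ on edges adjacent to $T$. Summing the resulting local bounds over $T \in \cT_h$ and invoking shape regularity (so that each edge appears in boundedly many element estimates) absorbs the multiplicities and produces both inequalities stated in the lemma simultaneously. The main obstacle is the vertex-gradient bookkeeping: the telescoping around each interior vertex must be done consistently, the $H^1$-induced cancellation of tangential jumps must be invoked to reduce full gradient jumps to the scalar $\sjump{\p v/\p n}$, and the boundary vertices must be matched to the prescribed dofs of $\tilde Q_h$; once this is in place the remainder is a standard combination of finite-dimensional norm equivalence, affine scaling, and one-dimensional inverse trace estimates.
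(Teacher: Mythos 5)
The paper itself does not prove this lemma --- it only cites \cite{BGGS2012CHC0IP} --- and your argument is essentially the standard proof used in that reference: finite-dimensional norm equivalence plus affine scaling on each macro-element to reduce everything to the dofs of $E_hv-v$, vanishing of the vertex-value dofs by continuity of $v$, telescoping of the vertex-gradient differences around each vertex with the tangential continuity of $\nabla v$ reducing full gradient jumps to $\sjump{\partial v/\partial n}$, the averaging/boundary prescriptions for the edge-midpoint normal derivatives, a one-dimensional inverse estimate to pass from point values to scaled edge $L^2$ norms, and shape regularity to control multiplicities in the final sum. So the proposal is correct and follows the same route as the cited source. The one spot you gloss over that deserves an explicit sentence is the corner vertices: there the construction forces $\nabla E_hv(x_i)=0$, so you must recover the full vector $\nabla v|_T(x_i)$ from its two components along the boundary-edge normals meeting at $x_i$ (these normals are linearly independent with an angle fixed by the domain, not the mesh), each such component being a point value of the boundary ``jump'' $\sjump{\partial v/\partial n}=\nabla v\cdot n_e$ after telescoping through the interior edges around $x_i$; with that remark in place the argument is complete.
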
 
\section{Auxiliary Results}\label{sec:model-problem1}
In this section, we prove the equality of two bilinear forms over the space $Q$ which plays a key role in obtaining the $L_{2}$-norm estimate on convex domains. Subsequently, we discuss the existence and uniqueness results for the solution to the optimal control problem and derive the corresponding optimality system. At the end of this section, we remark that this problem is equivalent to its corresponding Dirichlet control problem, \cite{CHOWDHURYDGcontrol2015}.  
Define a bilinear form $a(\cdot,\cdot):Q\times Q\rightarrow \mathbb{R}$ by
\begin{eqnarray}\label{bilinear_form}
a(u,v)=\int_{\Omega} \Delta u\,\Delta v\;dx.
\end{eqnarray}
The following lemma proves the equality of two bilinear forms over $Q$.
\begin{lemma}\label{lemma:importance}
Given $p,\;q\in Q$ we have 
\begin{align*}
a(q,p)=\int_{\Omega} D^2q:D^2p\;dx,
\end{align*} 
where $D^2q:D^2p =\sum_{i,j=1,2}\frac{\partial^{2} q}{\partial x_i\partial x_j}\frac{\partial^2p}{\partial x_{i}\partial x_{j}}.$
\end{lemma}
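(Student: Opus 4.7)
The plan is to establish the identity first for smooth members of $Q$ via integration by parts twice, isolating all boundary contributions, and then to extend to general $q, p \in Q$ by density.

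For $q, p \in Q \cap C^{\infty}(\bar\Omega)$, I would integrate by parts on $\int_\Omega \partial_i \partial_j q \cdot \partial_i \partial_j p \, dx$: first move $\partial_i$ from $\partial_i \partial_j q$ onto the other factor, then move $\partial_j$ off the resulting $\nabla q \cdot \nabla \Delta p$. After summing over $i,j$ this yields
\begin{align*}
\int_\Omega D^2 q : D^2 p \, dx = \int_\Omega \Delta q \, \Delta p \, dx - \int_{\partial\Omega} \frac{\partial q}{\partial n}\, \Delta p \, ds + \int_{\partial\Omega} \nabla q \cdot (D^2 p \, n) \, ds.
\end{align*}
The first boundary integral drops out immediately because $\partial q/\partial n = 0$ on $\partial\Omega$. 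For the second, the same Neumann condition lets me replace $\nabla q$ on $\partial\Omega$ by its tangential component $(\partial q/\partial t)\, t$, so the integrand reduces to $(\partial q/\partial t)\,(t^{\top} D^2 p\, n)$.

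The crucial step is to show that $t^{\top} D^2 p\, n$ vanishes on each straight edge of the polygon. Here I would exploit the fact that $n$ (and hence $t$) is constant along a single edge $e$, which lets me pull $n_j$ through the derivative: $\sum_{i,j} t_i n_j \partial_i \partial_j p = \partial_t \bigl(\sum_j n_j \partial_j p\bigr) = \partial_t(\partial p / \partial n)$ on $e$. Since $\partial p / \partial n$ is identically zero along $e$, so is its tangential derivative, and the integrand vanishes edge by edge (corners contribute nothing as a set of one-dimensional measure zero). This settles the identity for smooth functions in $Q$.

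To extend it to all of $Q$, I would invoke continuity of both bilinear forms on $H^2(\Omega) \times H^2(\Omega)$ together with density of $Q \cap C^{\infty}(\bar\Omega)$ in $Q$, which follows from mollification combined with a standard boundary correction on the Lipschitz polygon. The principal obstacle is the boundary computation above: the identification $t^{\top} D^2 p\, n = \partial_t(\partial p/\partial n)$ relies essentially on the polygonal geometry, since on a curved boundary a commutator term involving $\partial_t n$ would appear and would \emph{not} be annihilated by the Neumann condition alone. The density argument, while technical, is routine.
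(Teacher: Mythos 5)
Your computation for smooth members of $Q$ is correct, and it is essentially the same boundary manipulation the paper performs: after two integrations by parts the discrepancy reduces to $\int_{\partial\Omega}\frac{\partial q}{\partial t}\,\partial_t\bigl(\tfrac{\partial p}{\partial n}\bigr)\,ds$, which vanishes edge by edge because $n$ is constant on each straight edge and $\partial p/\partial n\equiv 0$ there. The genuine gap is the last step: the density of $Q\cap C^{\infty}(\bar\Omega)$ in $Q$ for the $H^{2}$ norm, which you dismiss as ``routine mollification combined with a standard boundary correction.'' On a polygon this is precisely the delicate point, and it is the reason the paper's proof is structured the way it is. Along the interior of an edge an even reflection plus mollification does preserve the condition, but at a corner the two constraints $\partial p/\partial n_{1}=0$ and $\partial p/\partial n_{2}=0$ interact: every $\phi\in C^{\infty}(\bar\Omega)\cap Q$ necessarily has $\nabla\phi=0$ at each corner whose angle differs from $\pi$, whereas a general $p\in Q$ has no well-defined gradient value there (since $\nabla p\in H^{1}(\Omega)$ and $H^{1}$ does not embed into $C^{0}$ in two dimensions), and the obvious repairs fail: cutting off near the corner produces terms of the type $\nabla p/r$ that need not lie in $L_{2}$ (a point has zero capacity, so no Hardy-type inequality is available), while subtracting a linear function to gain such decay destroys the Neumann condition on the two edges. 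So the density statement may well be true, but it requires a proof or a reference, and as stated your justification would break down exactly at the corners.

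The paper's argument is built to avoid this density question altogether: it replaces $p$ by the Hsieh--Clough--Tocher enrichment $E_{h}I_{h}(p)$, which lies in $Q$ by construction and has $\nabla E_{h}I_{h}(p)=0$ at the corners, approximates $q$ by \emph{unconstrained} functions in $C^{\infty}(\bar\Omega)$, carries out the same edge-wise integration by parts (the corner terms vanish by the design of $E_{h}$), and then passes to the limit $h\to 0$ via weak convergence and compactness in $H^{2}(\Omega)$ and in $X=\{\phi\in H^{1}(\Omega):\Delta\phi\in L_{2}(\Omega)\}$. If you can actually establish your density claim (note that you only need to approximate one of the two arguments inside $Q$; the other can remain a general $H^{2}$ function), your route would be shorter and cleaner than the paper's; without it, the proposal is incomplete at its crucial step.
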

\begin{proof}
Introduce a new function space $X$ defined by $$X=\{\phi\in H^{1}(\Omega): \;\Delta\phi\in L_{2}(\Omega)\},$$ endowed with the inner product $(\cdot,\cdot)_{X}$ given by 
\begin{align*}
 (\phi_{1},\phi_{2})_{X}=(\phi_{1},\phi_{2})_{H^{1}(\Omega)}+(\Delta\phi_{1},\Delta\phi_{2}),
\end{align*} 
where $(\cdot,\cdot)_{H^{1}(\Omega)}$ denotes the standard $H^{1}(\Omega)$ inner product. It is easy to check that $X$ is a Hilbert space with respect to $(\cdot, \cdot)_{X}$ (see \cite{Grisvard1992Singularities}). Let $E_{h}I_{h}(p)$ denotes the enrichment of $I_{h}(p)$ defined in subsection \ref{eo} where $I_{h}(p)$ is the Lagrange interpolation of $p$ onto the finite element space $Q_h$, \cite[Chapter 4]{BScott2008FEM}. A use of approximation properties of $I_h$, \cite{BScott2008FEM}, Lemma \ref{lem:EnrichApprx} and  triangle inequality yields $\|E_hI_h(p)\|_{X}\leq C\|p\|_{H^2(\Omega)}$. Banach Alaoglu theorem asserts the existence of a subsequence of $\{E_hI_h(p)\}$ (still denoted by $\{E_hI_h(p)\}$ for notational convenience) converging weakly to some $z\in X$. Continuity of first normal trace operator $\frac{\partial}{\partial n}:H(div,\Omega)\rightarrow H^{-\frac{1}{2}}(\partial\Omega)$, \cite{42t} implies the closedness of $Z$ where $Z=ker\left({\frac{\partial}{\partial n}}\right).$ Therefore, completeness of $Z$ implies $z\in Z$. Given $\phi\in Z$, consider the following problem
\begin{equation*}
\begin{split}
-\Delta\psi &=-\Delta \phi~\text{in} ~\Omega,\\   
\partial\psi/\partial n &=0~\text{on}~\partial\Omega.
\end{split}
\end{equation*}
By the elliptic regularity theory, we have $|\psi|_{H^{1+s}(\Omega)}\leq C\|\phi\|_{X}$ for some $s>0$, depending upon the interior angle of the domain (see \cite{GR:1986:Book}) which further implies $|\phi|_{H^{1+s}(\Omega)} \leq C\|\phi\|_{X}\; \text{or} \;\|\phi\|_{H^{1+s}(\Omega)}\leq C\|\phi\|_{X}\;\forall\phi\in Z$. It is easy to check that $Z$ is compactly embedded in $H^1(\Omega)$. 
Therefore, $E_hI_h(p)$ converges strongly to $z$ in $H^1(\Omega).$ 
A combination of Lemma \ref{lem:EnrichApprx}, trace inequality  for $H^1(\Omega)$ functions \cite[Section 1.6]{BScott2008FEM} and the  $H^{2}$-regularity of $p$ implies the strong convergence of $E_hI_h(p)$ to $p$ in $H^1(\Omega)$. 
The uniqueness of limit implies $z=p$ and hence
$E_hI_h(p)$ converges weakly to $p$ in $Z.$\\  
For any $\eta\in Z$, 
\begin{align*}
a( \eta,E_hI_h(p))=(\eta,E_hI_h(p))_{X}-(\eta,E_hI_h(p))_{H^1(\Omega)}.
\end{align*} 
Since, $E_hI_h(p)$ converges weakly to $p$ in $Z$ and $H^1(\Omega)$, therefore $(\eta,E_hI_h(p))_{X}$ converges to $(\eta,p)_{X}$ and $(\eta,E_hI_h(p))_{H^{1}(\Omega)}$ converges to $(\eta,p)_{H^1(\Omega)}$. Thus,
\begin{align*}
a( \eta,E_hI_h(p))\rightarrow a( \eta, p). 
 \end{align*}
Since $\|E_hI_h(p)\|_{H^2(\Omega)}\leq C\|p\|_{H^{2}(\Omega)}$, the subsequence considered in the previous case ($i.\;e.$ for the space $X$ which was still denoted by $\{E_hI_h(p)\}$) must have a weakly convergent subsequence denoted by $\{E_hI_h(p)\}$ (again for notational convenience!) converges weakly to some $\tilde{w}\in H^{2}(\Omega)$. The compact embedding of $H^{2}(\Omega)$ in $H^{1}(\Omega)$ implies the strong convergence of $E_hI_h(p)$ to $\tilde{w}$ in $H^1(\Omega)$ and hence by the uniqueness of the limit, $\tilde{w}=p$. Therefore,
\begin{align*} 
\int_{\Omega}D^{2}\eta:D^{2}E_hI_h(p)\;dx\rightarrow  \int_{\Omega}D^2\eta:D^2p\;dx ~\text{as}~ h\rightarrow 0.  
\end{align*}
Next, we aim to show that for $p,q\in Q,~ a(q,p)=\int_{\Omega} D^2q: D^2p\;dx$. Density of $C^{\infty}(\bar{\Omega})$ in $H^{2}(\Omega)$ yields the existence of a sequence $\{\phi_m\}\subseteq C^{\infty}(\bar{\Omega})$ with $\phi_{m}$ converges to $q$ in $H^{2}(\Omega)$. Using Green's formula, we arrive at 
\begin{eqnarray}\label{ravs}
\int_{\Omega}D^{2}\phi_{m}:D^2E_hI_h(p)\,dx-\int_{\Omega}\Delta\phi_m\,\Delta E_hI_h(p)\;dx= -\sum_{k=1}^{l}\int_{\Gamma_k}\frac{\partial^2\phi_m}{\partial n\partial t}\frac{\partial E_hI_h(p)}{\partial t}\;ds.\label{equation111}
\end{eqnarray} 
Applying integration by parts to the right hand side of \eqref{ravs} and then taking limit on both sides with respect to $m$, we find that
\begin{equation*} 
\int_{\Omega}D^{2}q:D^{2}E_hI_h(p)\;dx-a(q, E_hI_h(p))=\sum_{k=1}^{l}\int_{\Gamma_k}\frac{\partial q}{\partial n}\frac{\partial^2 E_hI_h(p)}{\partial t^2} \;ds=0.
\end{equation*}    
Since $q\in Q$, we conclude that $a(q,p)=\int_{\Omega}D^2 q:D^2 p\;dx.$ 
\end{proof}
\begin{remark}
	We note that the above Lemma helps us to show the equality of the cost functional considered in \eqref{model_problem} with  $\dfrac{1}{2}\|u-u_{d}\|^{2}+\dfrac{\alpha}{2}\|\Delta p\|^{2}$ over $Q$. Therefore if we consider the Optimal Control Problem:
	\begin{eqnarray*}
	\min_{p\in Q}\;J_{1}(u,p)=\dfrac{1}{2}\|u-u_{d}\|^{2}+\dfrac{\alpha}{2}\|\Delta p\|^{2}
	\end{eqnarray*}
 subject to 
\begin{equation*}
\begin{split}
\Delta^2u &=f \ \ \text{in}\ \  \Omega, \\
\hspace{4mm}   u &=p \ \ \text{on} \ \ \partial \Omega,\\
\partial u/\partial n&=0\ \ \text{on} \ \ \partial \Omega,
\end{split}
\end{equation*}
Then this Lemma proves that the solution of this problem is same as the solution of \eqref{model_problem}.
\end{remark}
It is easy to check that the bilinear form $a(\cdot,\cdot)$ defined in (\ref{bilinear_form}) is elliptic on $V (=H^{2}_{0}(\Omega))$ and bounded on $Q\times Q$ (see \cite{BScott2008FEM}).\\
The subsequent results of this section are not new and can be found in \cite{CHOWDHURYDGcontrol2015} but are briefly outlined for the sake of completeness and ease in reading.
\par
\noindent For  given $f\in L_{2}(\Omega)$ and $p\in
Q$, an application of Lax-Milgram lemma \cite{Ciarlet1978FEM,BScott2008FEM} gives the existence of an unique $u_{f}\in V$ such that
\begin{equation*}
a(u_{f},v)=(f,v)-a(p,v)\;\ \ \forall \ v\in V.
\end{equation*}
\par
\noindent Therefore, $u=u_{f}+p$ is the weak solution of the
following Dirichlet problem:
\begin{eqnarray*}
&\Delta^{2}u=f\;\;\;\;\text{in} \;\;\;\;\;\;\;\;\Omega\notag\\&u=p,\;\;\frac{\partial u}{\partial n}=0\;\;\;\; \text{on} \;\;\;\;\partial\Omega.
\end{eqnarray*}
In connection to the above discussion as in \cite{CHOWDHURYDGcontrol2015,10cgnm}, the optimal control problem described in \eqref{model_problem}-\eqref{b2} can be rewritten as
\begin{align}\label{mocp}
\min_{p\in Q} j(p):=\frac{1}{2}\|u_f+p-u_{d}\|^{2}+\frac{\alpha}{2}|p|_{H^{2}(\Omega)}^{2}.
\end{align}
The following theorem provides the existence and uniqueness of the solution to the optimal control problem and the corresponding optimality system.
\begin{theorem}\label{ctsoptim}
\; The problem \eqref{mocp} has a unique solution $(q,u)\in Q \times Q$. Moreover there is an additional variable known as adjoint state $\phi  \in V$ associated to the unique solution and the
  \;triplet  $(q,u,\phi)\in Q\times Q\times V$ that is $(optimal\;control,\;optimal\;state,\;adjoint\;state)$ satisfies
the following system, known as the
optimality or Karush Kuhn Tucker (KKT) system:
\begin{align}
u&=u_{f}+q,\;\;\;\;u_{f}\in V, \notag\\
a(u_{f},v)&=(f,v)-a(q,v)\ \ \;\forall \ v\in V,\label{2:3}\\
a(v,\phi)&=(u-u_{d},v)\ \ \;\forall \ v\in V,\label{2:4}\\
\alpha a(q,p)&=a(p,\phi)-(u-u_{d},p)\ \ \;\forall \ p\in
Q.\label{2:5}
\end{align}
\end{theorem}
\begin{proof}
From Lemma \ref{lemma:importance}, it is clear that for any $w,v\in Q$, we have $a(w,v)=\int_{\Omega}D^{2}w:D^{2}v\,dx$. The rest of the proof follows from the similar arguments as in   \cite[Proposition 2.2]{10cgnm} and Lemma \ref{lemma:importance}. 
\end{proof}
In the following remark, it is shown that the minimum energy of \eqref{model_problem}-\eqref{b2} is realized with an equivalent $H^{3/2}(\partial\Omega)$- norm of the first trace of the solution of \eqref{mocp}.
\begin{remark}
Since $\Omega$ is polygonal, we know that the trace of $Q$ is surjective onto a subspace of $\prod_{i=1}^{m}H^{3/2}(\Gamma_{i})$ (see \cite{Grisvard1985Singularities}) which we refer to as $H^{3/2}(\partial\Omega)$. The $H^{3/2}(\partial\Omega)$ semi-norm of any $p\in H^{3/2}(\partial\Omega)$ can be defined by
\begin{eqnarray*}
	|p|_{H^{3/2}(\partial\Omega)} := |u_{p}|_{H^{2}(\Omega)} =
	\min_{w\in Q,w=p\;\;on\;\partial\Omega}|w|_{H^{2}(\Omega)}.
\end{eqnarray*}
Here $u_{p}$ is the biharmonic extension of $p\in H^{3/2}(\partial\Omega)$.

That is $u_{p}\in Q$ satisfies
\begin{align*}
&u_{p}=z+\tilde{p},
\end{align*}
such that
\begin{align*}
&\int_{\Omega}D^2z:D^2v\;dx=-\int_{\Omega}D^2\tilde{p}:D^2v\;dx\ \ \;\forall\;v\in V,
\end{align*}
where $\tilde{p}\in Q$ satisfies $\tilde{p}|_{\partial\Omega}=p$.
Hence,
\begin{align*}
\int_{\Omega}D^2u_{q}:D^2v\;dx=0\ \ \;\forall\;v\in V.
\end{align*}
 In view of Lemma \ref{lemma:importance}, we have
\begin{align*} 
a(u_{q},v)=0\ \ \;\forall\;v\in V.
\end{align*}
From \eqref{2:5}, we have
\begin{align*}
a(q,v)=0 \ \ \; \forall\;v\in V,
\end{align*}
which implies $q=u_{q}.$
Therefore, the minimum energy in the minimization problem \eqref{model_problem}-\eqref{b2} is realized with an equivalent $H^{3/2}(\partial\Omega)$ norm of the optimal control $q$.
\end{remark}
Now, we define the discrete form of the continuous optimality system.

\par
\noindent $\mathbf{Discrete\;system}.$ A $C^{0}$-IP discretization
of the continuous optimality system consists of finding $u_h\in
Q_h$, $\phi_h\in V_h$ and $q_h\in Q_h$ such that
\begin{align}
u_{h}&=u^{h}_{f}+q_{h},\quad u_f^{h}\in V_h,\notag\\
a_{h}(u_{f}^{h},v_{h})&=(f,v_{h})-a_{h}(q_{h},v_{h})\ \ \;\forall \
v_{h}\in V_{h},\label{3:8}\\
a_{h}(\phi_{h},v_{h})&=(u_{h}-u_{d},v_{h})\ \ \;\forall \ 
v_{h}\in V_{h},\label{3:9}\\
\alpha a_{h}(q_{h},p_{h})&=a_{h}(\phi_{h},p_{h})-(u_{h}-u_{d},p_{h})\ \ \;\forall\  p_h\in
Q_{h}.\label{3:10}
\end{align}
It is easy to check that if $f=u_{d}=0$ then
$u_{f}^{h}=q_{h}=\phi_{h}=0$ which implies that the discrete
system \eqref{3:8}-\eqref{3:10} is uniquely solvable.
\\For $p_{h}\in Q_{h}$,  $u^{h}_{p_{h}}\in Q_{h}$ is defined as
follows:
\begin{equation*}
u^{h}_{p_{h}}=w_{h}+p_{h},
\end{equation*}
where $w_{h}\in V_{h}$ solves the following equation
\begin{eqnarray*}
a_{h}(w_{h},v_{h})=-a_{h}(p_{h},v_{h})\ \ \;\forall\;v_{h}\in V_{h}.
\end{eqnarray*}

\section{Energy Norm Estimate}\label{error analysis:energy}

In this section, we briefly discuss the error estimates for optimal control, state and adjoint variables $q$, $u$ and $\phi$ respectively in the
energy norm defined by \eqref{3:5}. Note that these results can be derived by similar arguments as in \cite[Theorem 4.1, Theorem 4.2]{CHOWDHURYDGcontrol2015}. Note that these estimates hold under minimal regularity assumptions.
\begin{theorem}\label{thm:EnergyEstimate}
The following optimal order error estimates hold for the optimal control $q$ in the energy norm:
\begin{align*}
\||q-q_{h}|\|_{h}\leq Ch^{min(\gamma,1)}\left(\|q\|_{H^{2+\gamma_1}(\Omega)}+\|\phi\|_{H^{2+\gamma_2}(\Omega)}+\|f\|\right)+\left(\sum_{T\in \mathcal{T}_{h}}h_{T}^{4}\|u-u_{d}\|^{2}_{T}\right)^{1/2}.
\end{align*}
Here $\gamma=\text{min}\{\gamma_1,\gamma_2\}$ is the minimum of the regularity index between the adjoint state $\phi$ and optimal control $q$. The constant $C$ depends only upon the shape regularity of the triangulation.
\end{theorem}
Optimal order error estimates for the optimal state and adjoint state are stated in the following theorem. 

\begin{theorem}\label{theorem:State_andAdjointState}
The optimal state $u$ and adjoint state $\phi$, satisfies the following error estimate in energy norm:
\begin{align*}
&\||u-u_{h}\||_{h}\leq Ch^{ min(\gamma,1)}\left(\|f|\|+\|q\|_{H^{2+\gamma_1}(\Omega)}+\|\phi\|_{H^{2+\gamma_2}(\Omega)}\right),\\
&|\|\phi-\phi_{h}|\|_{h}\leq Ch^{min(\gamma,1)}\left(\|f\|+\|q\|_{H^{2+\gamma_1}(\Omega)}+\|\phi\|_{H^{2+\gamma_2}(\Omega)}\right),
\end{align*}
where $\gamma_1$, $\gamma_2$ and $\gamma$ are same as in \text{Theorem {\ref{thm:EnergyEstimate}}} .
\end{theorem}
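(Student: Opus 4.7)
The plan is to reduce both bounds to a combination of a standard $C^{0}$ interior penalty analysis for a biharmonic problem with perturbed data and the control error already supplied by Theorem \ref{thm:EnergyEstimate}. For the state, I exploit the affine decomposition $u-u_h=(u_f-u_f^h)+(q-q_h)$ and estimate only the auxiliary quantity $\||u_f-u_f^h\||_h$, since $\||q-q_h\||_h$ is at the prescribed rate. For the adjoint, I repeat the scheme with $u-u_h$ as data, which enters only through the state estimate just obtained.

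For $u_f-u_f^h$, I first verify the $C^{0}$ IP consistency $a_h(u_f,v_h)=(f,v_h)-a_h(q,v_h)$ for every $v_h\in V_h$. This follows from element-wise integration by parts: since $u_f$ inherits $H^{2+\gamma_1}$ regularity (because $f\in L_2$ and $q\in H^{2+\gamma_1}$), since $u=u_f+q$ is globally $C^1$ so that the jumps of $\partial u/\partial n$ across interior edges vanish, and since $v_h\in V_h\subset H^1_0(\Omega)$, all penalty and mean-value boundary contributions cancel in the desired way. Subtracting the discrete equation \eqref{3:8} yields the perturbed Galerkin relation $a_h(u_f-u_f^h,v_h)=a_h(q_h-q,v_h)$ for every $v_h\in V_h$. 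Testing with $v_h=I_h u_f-u_f^h\in V_h$, applying coercivity \eqref{coercivity}, continuity \eqref{continuity} via the equivalent $\|\cdot\|_{Q_h}$ norm on $Q_h$, standard interpolation estimates, and a Poincar\'e inequality on $V_h$, one obtains $\||u_f-u_f^h\||_h\le Ch^{\min(\gamma_1,1)}\|u_f\|_{H^{2+\gamma_1}(\Omega)}+C\|q-q_h\|_{Q_h}$. The second summand is of the required order by the second inequality of Theorem \ref{thm:EnergyEstimate}, elliptic regularity bounds $\|u_f\|_{H^{2+\gamma_1}(\Omega)}$ in terms of $\|f\|$ and $\|q\|_{H^{2+\gamma_1}(\Omega)}$, and a triangle inequality closes the state estimate at rate $h^{\min(\gamma,1)}$.

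For $\phi-\phi_h$, the analogous argument begins with the consistency $a_h(v_h,\phi)=(u-u_d,v_h)$ for every $v_h\in V_h$, valid because $\phi\in H^{2+\gamma_2}(\Omega)\cap V$. Subtracting \eqref{3:9} gives $a_h(v_h,\phi-\phi_h)=(u-u_h,v_h)$; testing with $v_h=I_h\phi-\phi_h$ and using coercivity together with Cauchy-Schwarz yields $\||\phi-\phi_h\||_h\le C\|\phi-I_h\phi\|_{Q_h}+C\|u-u_h\|$. The first summand is of order $h^{\min(\gamma_2,1)}\|\phi\|_{H^{2+\gamma_2}(\Omega)}$ by interpolation, and $\|u-u_h\|$ is absorbed by the state estimate just proved (since $\||\cdot\||_h$ dominates the $L_2$ norm on $V+V_h$). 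The main obstacle is the careful verification of the $C^{0}$ IP consistency for the inhomogeneous biharmonic problem, in particular handling the $a_h(q,\cdot)$ term, together with the necessity of propagating the full $\|\cdot\|_{Q_h}$ seminorm of the control error; this is precisely what the second inequality of Theorem \ref{thm:EnergyEstimate} is designed to furnish, and the remainder of the argument mimics the Strang-type analysis of \cite{CHOWDHURYDGcontrol2015, BSung2005DG4}.
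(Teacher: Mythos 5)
Your overall reduction (split $u-u_h=(u_f-u_f^h)+(q-q_h)$, import the control error from Theorem \ref{thm:EnergyEstimate}, then treat the adjoint equation with $u-u_h$ as data) is the intended structure; the paper itself gives no proof but points to the argument of \cite{CHOWDHURYDGcontrol2015}. However, there is a genuine gap at the very first step of your argument: the consistency identities $a_h(u_f,v_h)=(f,v_h)-a_h(q,v_h)$ and $a_h(v_h,\phi)=(u-u_d,v_h)$ cannot be obtained by element-wise integration by parts under the hypotheses of the theorem. The estimates are explicitly claimed under minimal regularity, i.e. on a general polygon the indices $\gamma_1,\gamma_2$ may be $\le 1/2$, in which case $\Delta u_f\in H^{\gamma_1}(\Omega)$ and $\Delta\phi\in H^{\gamma_2}(\Omega)$ have no well-defined edge traces; the terms $\smean{\Delta u_f}$ and $\smean{\Delta\phi}$ occurring in $a_h$ then do not make sense, so neither your perturbed Galerkin relation $a_h(u_f-u_f^h,v_h)=a_h(q_h-q,v_h)$ nor the continuity step applied to $a_h(u_f-I_hu_f,v_h)$ is available. (The vanishing of $\sjump{\partial u/\partial n}$ across edges, which you argue from the embedding $H^{2+\gamma}\hookrightarrow C^1$, is not the problematic part.) Note that the paper is careful to use $a_h(w,\cdot)$ with a non-discrete first argument only when $\Delta w\in H^1(\Omega,\cT_h)$ (e.g. $\xi\in H^3(\Omega)$ in Section \ref{error analysis:L^{2}}), and the extra regularity $\Delta q\in H^1(\Omega)$ of Lemma \ref{lemma:regularity} is proved for the control $q$ only, not for $u_f$, nor for $\phi$ outside the convex setting of the $L_2$ section.

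The argument the paper relies on (Theorem 4.1 of \cite{CHOWDHURYDGcontrol2015}, in the spirit of the minimal-regularity analyses of \cite{Gudi2010NewAnalysis,BGGS2012CHC0IP}) replaces consistency by a medius-type device: one tests the discrete equations with $v_h\in V_h$, and instead of inserting the exact solution into $a_h$, one passes to a conforming function via the enriching operator of Lemma \ref{lem:EnrichApprx}, invokes the continuous variational equation only with that conforming test function, and controls the resulting nonconformity remainder through the approximation properties of $E_h$ together with standard local (bubble-function/efficiency-type) bounds; this is exactly what allows the estimate to be stated with only $f\in L_2(\Omega)$, $q\in H^{2+\gamma_1}(\Omega)$, $\phi\in H^{2+\gamma_2}(\Omega)$. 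If you either substitute this enriching-operator argument for your Strang-type consistency step, or add the hypotheses that make your step legitimate (e.g. convexity plus a proof that $\Delta u_f$ and $\Delta\phi$ lie in $H^1(\Omega)$, along the lines of Lemma \ref{lemma:regularity}), the rest of your reduction closes as you describe.
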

\section{The $L_{2}$-Norm Estimate}\label{error analysis:L^{2}}
This section is devoted to the $L_{2}$-norm error estimate
for the optimal control. In this section, we assume the domain to be convex unless it is mentioned explicitly otherwise.  Note that even with this restriction on the domain, the optimal control $q$ can be quite rough but it helps the adjoint state $\phi$ to gain $H^{3}$-regularity \cite{BSung2005DG4}, which plays an essential role to derive the desired error estimate.  


We begin by taking test functions from $\mathcal{D}(\Omega)$ in (\ref{2:4})
 to obtain
\begin{align*}
\Delta^2\phi=u-u_d\;\text{ in } \;\Omega,
\end{align*}
in the sense of distributions. Further, density of $\mathcal{D}(\Omega)
$ in $L_{2}(\Omega)$ yields
\begin{align}\label{adjoint}
\Delta^2\phi=u-u_d \ \;\text{a.\;e.\; in } \;\Omega,
\end{align}
and hence, \eqref{adjoint} along with the convexity of domain implies $\nabla(\Delta\phi)\in H(div,\Omega)$.\\
Next, the density of $C^{\infty}(\bar{\Omega})\times C^{\infty}(\bar{\Omega})$ in $H(div,\Omega)$ space \cite{42t} enables us to write
\begin{eqnarray}
\int_{\Omega}\nabla(\Delta)\phi.\nabla\psi\;dx+\int_{\Omega}\Delta^{2}\phi\ \psi\;dx=\Big\langle
\frac{\partial\Delta\phi}{\partial
	n},\psi\Big\rangle_{-\frac{1}{2},\frac{1}{2},\Omega}\ \ \;\forall\,
\psi\in H^{1}(\Omega).\label{5:1}
\end{eqnarray}
On combining \eqref{adjoint} and \eqref{5:1}, we obtain
\begin{align}\label{eq:5:2}
\int_{\Omega}\nabla(\Delta)\phi.\nabla\psi\;dx+\int_{\Omega}(u-u_d)\psi\;dx=\Big\langle
\frac{\partial\Delta\phi}{\partial
n},\psi\Big\rangle_{-\frac{1}{2},\frac{1}{2},\Omega}\ \ \;\forall \ 
\psi\in H^{1}(\Omega).
\end{align}
Additionally, if $\psi\in Q$ in \eqref{eq:5:2}, we find that
\begin{eqnarray}
a(\phi,\psi)-\int_{\Omega}(u-u_{d})\psi\;dx=-\Big\langle \frac{\partial\Delta\phi}{\partial n},\psi\Big\rangle_{-\frac{1}{2},\frac{1}{2},\Omega}.\label{5:2}
\end{eqnarray}
We use the following auxiliary result for the subsequent error analysis.
\begin{lemma}\label{lemma:solution}
For the following variational problem of finding $w\in H^{1}(\Omega)\; such\; that,$
\begin{align*}
(\nabla w,\nabla p)=-\frac{1}{\alpha}\Big\langle \frac{\partial(\Delta\phi)}{\partial n} ,p\Big\rangle\ \ \;\forall\  p\in H^{1}(\Omega),
\end{align*}
there exist a solution $w\in H^1(\Omega)$  unique upto an additive constant. 
\end{lemma}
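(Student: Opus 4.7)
The plan is to recognize the equation as a pure Neumann Laplace problem in disguise and to apply the Lax--Milgram lemma on the quotient space $H^{1}(\Omega)/\mathbb{R}$, once a compatibility condition has been verified. First I will establish the compatibility $\langle \partial(\Delta\phi)/\partial n, 1\rangle_{-\frac{1}{2},\frac{1}{2},\partial\Omega} = 0$. Because $Q$ is defined by a vanishing \emph{normal} derivative, it contains the constant function $1$. Testing \eqref{5:2} against $\psi \equiv 1$ and using $a(\phi, 1) = 0$ (since $\Delta 1 = 0$) yields $\langle \partial(\Delta\phi)/\partial n, 1\rangle = (u - u_{d}, 1)$. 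Separately, testing \eqref{2:5} against $p \equiv 1$ and using $a(q, 1) = a(1, \phi) = 0$ forces $(u - u_{d}, 1) = 0$. Chaining these gives the required vanishing, so the right-hand side functional $\ell(p) := -\tfrac{1}{\alpha}\langle \partial(\Delta\phi)/\partial n, p\rangle_{-\frac{1}{2},\frac{1}{2},\partial\Omega}$ descends to a functional on $H^{1}(\Omega)/\mathbb{R}$.

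Next I will verify that $\ell$ is bounded on $H^{1}(\Omega)$. The paper already uses $H^{3}$-regularity of $\phi$ on a convex domain (as the solution of $\Delta^{2}\phi = u - u_{d} \in L^{2}(\Omega)$ with the appropriate boundary data inherited from the adjoint equation), so $\nabla(\Delta\phi) \in L^{2}(\Omega;\mathbb{R}^{2})$ while $\mathrm{div}(\nabla(\Delta\phi)) = u - u_{d} \in L^{2}(\Omega)$. Thus $\nabla(\Delta\phi) \in H(\mathrm{div};\Omega)$, and the continuous normal trace operator $H(\mathrm{div};\Omega) \to H^{-1/2}(\partial\Omega)$ places $\partial(\Delta\phi)/\partial n$ in $H^{-1/2}(\partial\Omega)$. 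Composed with the continuous Dirichlet trace $H^{1}(\Omega) \to H^{1/2}(\partial\Omega)$, this yields boundedness of $\ell$.

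Finally I will invoke Lax--Milgram for the bilinear form $B(w, p) := (\nabla w, \nabla p)$, which is manifestly bounded and is coercive on $H^{1}(\Omega)/\mathbb{R}$ by the Poincar\'e--Wirtinger inequality. This produces a unique solution class in $H^{1}(\Omega)/\mathbb{R}$, and any lift $w \in H^{1}(\Omega)$ solves the stated variational problem and is determined up to an additive constant. I expect the main obstacle to be the compatibility step: it is not an explicit hypothesis of the lemma and has to be extracted from the optimality system. The key observation that makes it work is that $Q$ contains the constants (thanks to the homogeneous Neumann condition built into $Q$), so the single test function $\psi \equiv 1$ can be fed simultaneously into \eqref{5:2} and \eqref{2:5} to produce the needed cancellation; once this is in hand, the remainder is a textbook Neumann argument.
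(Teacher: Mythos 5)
Your proof is correct and follows essentially the same route as the paper's one-sentence proof: Lax--Milgram (equivalently Riesz representation) on the quotient space $H^{1}(\Omega)/\mathbb{R}$, where the Poincar\'e--Wirtinger inequality makes the $H^{1}$-seminorm a norm. The only difference is that you additionally verify the compatibility condition $\langle \partial(\Delta\phi)/\partial n, 1\rangle_{-\frac{1}{2},\frac{1}{2},\partial\Omega}=0$ (via $(u-u_{d},1)=0$ from \eqref{2:5} and \eqref{5:2}) and the boundedness of the right-hand side functional through the $H(\mathrm{div},\Omega)$ normal trace; these details are left implicit in the paper but are genuinely needed for existence, since constants are admissible test functions in the stated problem.
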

\begin{proof}
The proof follows from the fact that the $H^1(\Omega)$ semi-norm defines a norm on the quotient space $H^{1}(\Omega)/\mathbb{R}$.
\end{proof}

The following Lemma provides one of the major difference of this article from \cite{CHOWDHURYDGcontrol2015}. The corresponding Lemma in \cite[Lemma 5.2]{CHOWDHURYDGcontrol2015} assumes that the interior angles of the domain should not exceed $120$ degrees but in view of Lemma \ref{lemma:importance}, we are now able to prove the following Lemma with a more relaxed interior angle condition (all the interior angles are less than $180$ degrees).
It also helps to establish a more direct relation between the optimal control and adjoint state.
\begin{lemma}\label{lemma:regularity}
The optimal control $q$ satisfies $\Delta q \in H^{1}(\Omega)$ and $\nabla(\Delta q)\in H(div,\Omega)$.
\end{lemma}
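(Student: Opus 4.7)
My strategy is to identify $\Delta q$, up to an additive constant, with the auxiliary $H^1(\Omega)$ function $w$ produced by Lemma \ref{lemma:solution}. The identification will come from rewriting the first-order optimality condition \eqref{2:5} as a scalar integral equation in $\Delta q$ alone and then comparing it, via Green's identity, with the equation defining $w$.

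Using Lemma \ref{lemma:importance} (so that $a(q,p)=\int_\Omega \Delta q\,\Delta p\,dx$ for $p\in Q$) together with \eqref{5:2} (which rewrites $a(p,\phi)-(u-u_d,p)$ as $-\langle \partial(\Delta\phi)/\partial n,p\rangle_{-1/2,1/2,\partial\Omega}$ for $p\in Q$), the optimality condition \eqref{2:5} becomes
\[\alpha\int_\Omega \Delta q\,\Delta p\,dx = -\left\langle \frac{\partial(\Delta\phi)}{\partial n},p\right\rangle_{-\frac{1}{2},\frac{1}{2},\partial\Omega}\qquad\forall\,p\in Q.\]
Now let $w\in H^1(\Omega)$ be the function supplied by Lemma \ref{lemma:solution}. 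For $p\in Q\subset H^1(\Omega)$, Green's first identity (valid because $w\in H^1(\Omega)$, $p\in H^2(\Omega)$, and $\partial p/\partial n=0$ on $\partial\Omega$) gives $(\nabla w,\nabla p)=-\int_\Omega w\,\Delta p\,dx$; combining this with the defining equation for $w$ and with the displayed optimality identity yields
\[\int_\Omega(\Delta q + w)\,\Delta p\,dx = 0\qquad\forall\,p\in Q.\]

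Since $\int_\Omega \Delta p\,dx = \int_{\partial\Omega}\partial p/\partial n\,ds = 0$ for every $p\in Q$, the set $\Delta(Q)$ is contained in $L_2^\circ(\Omega) := \{g\in L_2(\Omega) : \int_\Omega g\,dx=0\}$. Conversely, on the convex polygonal domain of Section \ref{error analysis:L^{2}} the standard $H^2$-regularity of the Neumann problem for $-\Delta$ gives, for every $g\in L_2^\circ(\Omega)$, some $p\in H^2(\Omega)$ with $\Delta p=g$ and $\partial p/\partial n=0$, so $\Delta(Q)=L_2^\circ(\Omega)$. The orthogonality relation above therefore forces $\Delta q+w$ to be a constant $c$, so $\Delta q = c - w \in H^1(\Omega)$. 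Finally, testing Lemma \ref{lemma:solution} against $p\in H^1_0(\Omega)$ kills the boundary duality and gives $\Delta w=0$ in $\mathcal{D}'(\Omega)$; hence $\nabla\cdot\nabla(\Delta q) = -\Delta w = 0\in L_2(\Omega)$, and together with $\nabla(\Delta q)=-\nabla w\in L_2(\Omega)^2$ this delivers $\nabla(\Delta q)\in H(div,\Omega)$.

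I expect the main obstacle to be the surjectivity (or at least the density) of $\Delta:Q\to L_2^\circ(\Omega)$ on a general polygon, where the Neumann solution to $-\Delta p=g$, $\partial p/\partial n=0$ can fail to belong to $H^2(\Omega)$ at reentrant corners. In the convex setting of Section \ref{error analysis:L^{2}} this step is immediate from Grisvard's $H^2$-regularity for the Laplacian; extending it to non-convex polygons—as the remark preceding the lemma suggests is possible—would require an additional approximation of arbitrary $L_2^\circ$ data by smoother data whose Neumann solutions genuinely lie in $Q$, for instance via cutoffs isolating the reentrant corners.
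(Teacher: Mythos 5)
Your proposal is correct and follows essentially the same route as the paper's own proof: both rewrite \eqref{2:5} using Lemma \ref{lemma:importance} and \eqref{5:2}, compare the result with the auxiliary function $w$ of Lemma \ref{lemma:solution} to get $(\Delta q \pm w,\Delta p)=0$ for all $p\in Q$, and then invoke $H^2$ regularity of the Neumann Laplacian on a convex polygon (surjectivity of $\Delta$ from $Q$ onto the mean-zero part of $L_2(\Omega)$) to conclude $\Delta q = \mp w+\mathrm{const}\in H^1(\Omega)$. The only minor deviation is the final step, where the paper derives $\Delta^2 q=0$ by testing \eqref{2:5} with $\mathcal{D}(\Omega)$ functions and using \eqref{adjoint} while you deduce $\Delta w=0$ from Lemma \ref{lemma:solution} with $H^1_0(\Omega)$ test functions (an equivalent conclusion), and your closing caveat about non-convex polygons is consistent with the paper, which covers that situation by the density argument of Section \ref{sec:aa} (Lemma \ref{weak_density}) rather than by elliptic regularity.
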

\begin{proof}
Using Lemma \ref{lemma:importance}, \eqref{2:5} and \eqref{5:2}, we find that
\begin{align}\label{eq:5:4}
a(q,p)=-\frac{1}{\alpha}\Big\langle\frac{\partial(\Delta\phi)}{\partial
n},p\Big\rangle_{-\frac{1}{2},\frac{1}{2},\partial\Omega}\ \ \;\forall \ p\ \in Q.
\end{align}
A use of integration by parts for $p\in Q$ in Lemma \ref{lemma:solution} yields
\begin{align}\label{eq:5:5}
(w,\Delta p)=-\frac{1}{\alpha}\Big\langle\frac{\partial(\Delta\phi)}{\partial
n},p\Big\rangle_{-\frac{1}{2},\frac{1}{2},\partial\Omega}\ \ \;\forall \ p\in H^{1}(\Omega).
\end{align}

\par
\noindent
From \eqref{eq:5:4} and \eqref{eq:5:5}, we find that
\begin{align*}
(w-\Delta q,\Delta p)=0\ \ \;\forall\ p\in Q.
\end{align*}
A use of elliptic regularity theory for Poisson equation having Neumann boundary condition on polygonal convex domains along with the fact that $q\in H^{2}(\Omega)$  imply that $w-\Delta q$ belongs to the orthogonal complement of $L^0_{2}(\Omega)$, where
\begin{align*}
L^0_{2}(\Omega)=\Big\{\psi\in L_{2}(\Omega): \int_{\Omega}\psi=0\Big\}.
\end{align*}
Therefore, $\Delta q=w+a,$ where $a$ is some constant function. Hence $\Delta q\in H^{1}(\Omega).$
Taking test functions from $\mathcal{D}(\Omega)$ in \eqref{2:5} and using \eqref{adjoint} together with integration by parts, we obtain
\begin{eqnarray*}
\Delta^{2}q=0\;\;\;\text{in}\;\;\Omega,
\end{eqnarray*}
in the sense of distributions. The rest of the proof follows from the density of $\mathcal{D}(\Omega)$ in $L_{2}(\Omega)$.
\end{proof}

The following Remark and Lemma can be found in \cite{CHOWDHURYDGcontrol2015} but is discussed here for the sake of completeness.
\begin{remark}
Since, $C^{\infty}(\bar{\Omega})\times C^{\infty}(\bar{\Omega})$ is dense in $H(div,\Omega)$, the following holds
\begin{equation*}
(\Delta^{2}q,p)+(\nabla(\Delta q),\nabla p)=\Big\langle\frac{\partial(\Delta q)}{\partial
n},p\Big\rangle_{-\frac{1}{2},\frac{1}{2},\partial\Omega},
\end{equation*}
but $\Delta^2 q=0$ in $\Omega$, gives
\begin{equation*}
(\nabla(\Delta q),\nabla p)=\Big\langle\frac{\partial(\Delta q)}{\partial
n},p\Big\rangle_{-\frac{1}{2},\frac{1}{2},\partial\Omega}.
\end{equation*}
\par
\noindent
A use of Green's identity yields
\begin{eqnarray}
a(q,p)=-\Big\langle\frac{\partial(\Delta q)}{\partial
n},p\Big\rangle_{-\frac{1}{2},\frac{1}{2},\partial\Omega}\ \ \;\forall \ p\in Q.\label{5:4}
\end{eqnarray}
Using \eqref{5:4} and \eqref{2:5} along with \eqref{5:2}, we find that
\begin{equation*}
\Big\langle\frac{\partial(\Delta \phi)}{\partial
n},p\Big\rangle_{-\frac{1}{2},\frac{1}{2},\partial\Omega}=\alpha\Big\langle\frac{\partial\Delta q}{\partial n},p\Big\rangle_{-\frac{1}{2},\frac{1}{2},\partial\Omega}\ \ \;\forall\  p\in Q.
\end{equation*}
\end{remark}
The following result is proved in \cite{CHOWDHURYDGcontrol2015} but still we are providing a proof for the convenience of the reader. The following lemma shows that the optimal control $q\in Q$ and adjoint state $\phi \in V$ are directly related.

\begin{lemma}\label{lemma:5.3}
For $p\in H^{\frac{1}{2}}(\partial\Omega)$, we have
\begin{equation*}
\alpha\Big\langle\frac{\partial\Delta q}{\partial n},p \Big\rangle_{-\frac{1}{2},\frac{1}{2},\partial\Omega}=\Big\langle\frac{\partial\Delta \phi}{\partial n},p \Big\rangle_{-\frac{1}{2},\frac{1}{2},\partial\Omega}.
\end{equation*}
\end{lemma}

\begin{proof}
As we know, if $\Omega$ is a Lipschitz domain then the space $\big\{u|_{\partial\Omega}: u\in C^{\infty}(\mathbb{R}^{2})\big\}$ is dense  in $H^{1/2}(\partial\Omega)$ \cite[Proposition 3.32]{cdBook}. Therefore, there exists a sequence $\{\psi_{n}\}\subset C^{\infty}(\partial\Omega)$ such that $\psi_{n}\rightarrow p$ in $H^{1/2}(\partial\Omega)$. Let $u_{n}$ be the weak solution of the following PDE
\begin{align*}
\Delta^{2}u_{n}&=0\text{   in   }\Omega,\\
u_{n}&=\psi_{n}\text{   on   }\partial\Omega,\\
\partial u_{n}/\partial n&=0\text{   on   }\partial\Omega.
\end{align*}
Clearly, $u_{n}\in Q$ and $u_{n}|_{\partial\Omega}=\psi_{n}$ then for any $\epsilon>0$, we have
\begin{align*}
\Big|\Big\langle \frac{\partial(\Delta \phi)}{\partial n}-\alpha\frac{\partial(\Delta q)}{\partial n},p\Big\rangle_{-\frac{1}{2},\frac{1}{2},\partial\Omega}\Big|&=\Big|\Big\langle \frac{\partial(\Delta \phi)}{\partial n}-\alpha\frac{\partial(\Delta q)}{\partial n},p-\psi_{n}\Big\rangle_{-\frac{1}{2},\frac{1}{2},\partial\Omega}\Big|\\
&\leq \Big\|\frac{\partial(\Delta \phi)}{\partial n}-\alpha\frac{\partial(\Delta q)}{\partial n}\Big\|_{H^{-1/2}(\partial\Omega)}\|p-\psi_{n}\|_{H^{1/2}(\partial\Omega)}\\
&\leq \epsilon.
\end{align*}
This completes the rest of the proof.
\end{proof}
A use of Lemma \ref{lemma:regularity} along with discrete trace inequality for $H^{1}$ functions and standard interpolation error estimates \cite{BScott2008FEM} completes the proof of the following lemma.
\begin{lemma}\label{krsna}
The optimal control satisfies the following error estimate
\begin{align*}
\|q-q_{h}\|+\|q-q_{h}\|_{Q_{h}}\leq &Ch^{min(\gamma,1)}\left(\|q\|_{H^{2+\gamma_1}(\Omega)}+\|\nabla(\Delta q)\|+\|\phi\|_{H^{2+\gamma_2}(\Omega)}+\|f\|\right)\\&+\left(\sum_{T\in \mathcal{T}_{h}}h_{T}^{4}\|u-u_{d}\|^{2}_{T}\right)^{1/2},
\end{align*}
where $\gamma$ and $C$ are as in Theorem \ref{thm:EnergyEstimate}.
\end{lemma}
In the following theorem, we derive the optimal order $L_2$-norm estimate for the optimal control $q \in Q$.
\begin{theorem}\label{thm:L2estimate}
The optimal control $q$, satisfies the following optimal order error estimate
\begin{align*}
\|q-q_{h}\|\leq Ch^{2\beta}\left(\|q\|_{H^{2+\beta}(\Omega)}+\|\nabla(\Delta q)\|+\|f\|+\|\phi\|_{H^{3}(\Omega)}\right),
\end{align*}
where $\beta>0$ is the elliptic regularity for the optimal control $q$.
\end{theorem}
\begin{proof}
We deduce the $L_2$-norm error estimate by duality argument. Following the discussion as in \cite{CHOWDHURYDGcontrol2015}, the auxiliary optimal control problem is to 
find $r\in Q$ such that
\begin{equation}\label{5:6}
j(r)=\min_{p\in Q}\;j(p):=\frac{1}{2}\|u_{p}-(q-q_{h})\|^{2}+\frac{\alpha}{2}|p|^{2}_{H^{2}(\Omega)},
\end{equation}
where $u_{p}=w+p$ and $w\in V$ satisfies the following equation
\begin{equation*}
\int_{\Omega}D^{2}w:D^{2}v\;dx=-\int_{\Omega}D^2p:D^{2}v\;dx\ \ \;\forall \;v\in V.
\end{equation*}
The standard theory of optimal control problems constrained by partial differential equations provide the existence of a unique solution $r\in Q$ of the above optimal control problem \eqref{5:6}. For a detailed discussion, we refer to \cite{39l1971,trolzstch2005BOOK}. It is easy to check that $r \in Q$ satisfies the following optimality condition:
\begin{equation*}
\alpha \int_{\Omega}D^{2}r:D^{2}p\;dx+\int_{\Omega}u_{r}\,u_{p}\;dx=(q-q_{h},u_{p})\ \ \;\forall\  p\in Q.
\end{equation*}
From Lemma \ref{lemma:importance}, we obtain that
\begin{equation}\label{5:7}
\alpha a(r,p)+(u_{r},u_{p})=(q-q_{h},u_{p})\ \ \;\forall\  p\in Q. 
\end{equation}
This implies,
\begin{equation}\label{5:8}
\alpha a(r,p)+(u_{r},p)-a(\xi,p)=(q-q_{h},p)\ \ \;\forall\  p\in Q,
\end{equation}
with $\xi\in H^{2}_{0}(\Omega)$ satisfies the following equation
\begin{equation}\label{eq:5:9}
a(\xi,v)=(u_{r}-(q-q_{h}),v)\ \ \;\forall\  v\in H^{2}_{0}(\Omega).
\end{equation}
Elliptic regularity theory for clamped plate problems on convex domains imply that $\xi\in H^{3}(\Omega)\cap H^{2}_{0}(\Omega)$. 
From \eqref{eq:5:9}, we obtain 
\begin{eqnarray*}
\Delta^{2}\xi=u_r-(q-q_h)\quad\text{in}\quad\Omega,
\end{eqnarray*}
in the sense of distributions. Since, $\mathcal{D}(\Omega)$ is dense in $L_2(\Omega)$, we find that
\begin{eqnarray*}
&\Delta^{2}\xi =u_{r}-(q-q_{h})\quad\text{a.\;e.\;in}\ \ \quad \Omega,
\\
&\xi=0;\ \ \ \frac{\partial\xi}{\partial n}=0 \quad \text{on} \quad \partial\Omega\notag.
\end{eqnarray*}
Therefore $\nabla (\Delta\xi)\in H(div,\Omega)$ which implies $\frac{\partial(\Delta\xi)}{\partial n}\in H^{-1/2}(\partial\Omega)$. Using density of $C^{\infty}(\bar{\Omega})\times C^{\infty}(\bar{\Omega})$ in $H(div,\Omega)$ (see \cite{42t}), we find that
\begin{align}
\int_{\Omega}\nabla(\Delta\xi)\cdot\nabla p\;dx+\int_{\Omega}\Delta^{2}\xi\, p\;dx=\Big\langle\frac{\partial(\Delta\xi)}{\partial n},p\Big\rangle_{-\frac{1}{2},\frac{1}{2},\partial\Omega}\ \ \;\forall \ p\in H^{1}(\Omega).\label{intbyptsa}
\end{align}
Integration by parts along with \eqref{5:8} yields
\begin{equation}
\alpha a(r,p)=-\Big\langle \frac{\partial(\Delta\xi)}{\partial n},p\Big\rangle_{-\frac{1}{2},\frac{1}{2},\partial\Omega}\ \ \;\forall \ p\in Q.\label{dc}
\end{equation}
Choosing test functions from $\mathcal{D}(\Omega)$ in \eqref{5:7} and using the density argument, we obtain that
\begin{align}\label{dual optimal control}
\Delta^2r=0\ \ \;\text{a.\;e.\;in}\ \ \;\Omega.
\end{align}
Arguments similar to the ones used for proving Lemma \ref{lemma:regularity} along with \eqref{dc} yields  $\nabla(\Delta r)\in H(div,\Omega)$. Therefore $\frac{\partial(\Delta r)}{\partial n}\in H^{-1/2}(\partial\Omega)$
which further implies
\begin{align*}
\alpha\Big\langle \frac{\partial(\Delta r)}{\partial n},p\Big\rangle_{-\frac{1}{2},\frac{1}{2},\partial\Omega}=\Big\langle \frac{\partial(\Delta\xi)}{\partial n},p\Big\rangle_{-\frac{1}{2},\frac{1}{2},\partial\Omega}\ \ \;\forall \ p\in Q.
\end{align*}
Using Lemma \ref{lemma:5.3}, we find that 
\begin{align}
\alpha\Big\langle \frac{\partial(\Delta r)}{\partial n},p\Big\rangle_{-\frac{1}{2},\frac{1}{2},\partial\Omega}=\Big\langle \frac{\partial(\Delta\xi)}{\partial n},p\Big\rangle_{-\frac{1}{2},\frac{1}{2},\partial\Omega}\ \ \;\forall\  p\in H^{1/2}(\partial\Omega).\label{5:10}
\end{align}
To derive the $L_2$-norm error estimate, $\langle q\rangle + Q_{h}$ is used as a test function space.\\
Using the same arguments as in \cite[Thorem 5.4]{CHOWDHURYDGcontrol2015}, we obtain
\begin{align}
 \|q-q_{h}\|^{2}  =&-\int_{\Omega}(q-q_{h})(u^{h}_{q}-u_{q})\;dx-a_{h}(\xi,u^{h}_{q-q_{h}})\notag\\&+\alpha a_{h}(r-r_{h},q-q_{h})+a_{h}(\phi-\phi_{h},r_{h}-r)+a_{h}(\phi-\phi_{h},r)\notag\\&-\int_{\Omega}(u_{f}-u_{f}^{h})\,r_{h}\;dx+\int_{\Omega}(u_{q}-u^{h}_{q_{h}})\,(r-r_{h})\;dx+\int_{\Omega}u_{r}\,(u^{h}_{q}-u_{q})\;dx.\label{5:14}
\end{align}
Now, we estimate each term on the right hand side of \eqref{5:14} one by one.
The following duality argument is used to find the estimate for the first term.
\begin{eqnarray}\label{po1}
\|u^{h}_{q}-u_{q}\|=\sup_{w\in L^{2}(\Omega),w\neq 0}\frac{(u^{h}_{q}-u_{q},w)}{\|w\|}.
\end{eqnarray}
Consider the following dual problem
\begin{eqnarray}\label{auxadjeqn}
&\Delta^{2}\phi_{w}=w\text{ in }\Omega,\\&\phi_{w}=0,\quad\frac{\partial\phi_{w}}{\partial n}=0 \text{ on }\partial\Omega.\notag
\end{eqnarray}
Let $P_{h}(w)$ be the $C^{0}$-interior penalty approximation of the solution of \eqref{auxadjeqn}. Hence,
\begin{equation*}
\begin{split}
    (u^{h}_{q}-u_{q},w)&=a_{h}(\phi_{w},u^{h}_{q}-u_{q})\\
    &=a_{h}(\phi_{w}-P_{h}(\phi_w),u^{h}_{q}-u_{q})\notag\\&\leq C\|\phi_{w}-P_{h}(\phi_{w})\|_{Q_{h}}\|u^{h}_{q}-u_{q}\|_{Q_{h}}\\
    &\leq Ch\|w\|\; \|u^{h}_{q}-u_{q}\|_{Q_{h}}.
\end{split}
\end{equation*}
We define
$u^h_{q-q_h}$ as $u^h_{q-q_h}=v_{0h}+q-q_h$, where $v_{0h}\in V_h$ solves the following equation
\begin{eqnarray*}
a_{h}(v_{0h},v_{h})=-a_{h}(q-q_{h},v_{h})\ \ \; \forall\  v_{h}\in V_{h}
\end{eqnarray*}
Using the coercivity of $a_{h}(\cdotp,\cdotp)$, we find that
\begin{align*}
c_{1}\|v_{0h}\|^2_{h}&\leq a_h(v_{0h},v_{0h})=-a_{h}(q-q_{h},v_{0h})\\
&=-\sum_{T\in \mathcal{T}_{h}}\int_{T}\Delta(q-q_{h})\Delta v_{0h}\;dx-\sum_{e\in \mathcal{E}_{h}}\int_{e}\smean{\Delta(q-q_{h})}\sjump{\partial v_{0h}/\partial n}\;ds\\
&\quad-\sum_{e\in \mathcal{E}_{h}}\int_{e}\smean{\Delta v_{0h}}\sjump{\partial (q-q_{h})/\partial n}\;ds-\sum_{e\in\mathcal{E}_{h}}\sigma/|e|\int_{e}\sjump{\partial(q-q_{h})/\partial n}\sjump{\partial v_{0h}/\partial n}\;ds\\&\leq\left(\sum_{T\in \mathcal{T}_{h}}\|\Delta(q-q_{h})\|^2_{T}+\sum_{e\in \mathcal{E}_{h}}|e|\|\smean{\Delta(q-q_{h})}\|_{e}^2+\sigma\sum_{e\in\mathcal{E}_{h}}\|\sjump{\partial(q-q_{h})/\partial n}\|_{e}^2\right)^{1/2}\\&\quad\left(\sum_{T\in \mathcal{T}_{h}}\|\Delta v_{0h}\|_{T}^{2}+(\sigma+2)\sum_{e\in\mathcal{E}_{h}}\frac{1}{|e|}\|\sjump{\partial v_{0h}/\partial n}\|_{e}^2+\sum_{e\in\mathcal{E}_{h}}|e|\|\smean{\Delta v_{0h}}\|_{e}^2\right)^{1/2}\\ &\leq C_{2}|e|^{\beta}\left(\|q\|_{H^{2+\beta}(\Omega)}+\|f\|+\|\nabla(\Delta q)\|+|e|^{-\beta}\left(\sum_{T\in \mathcal{T}_{h}}h^{4}\|u-u_{d}\|^{2}_{T}\right)^{1/2}\right)\|v_{0h}\|_{Q_{h}}.
\end{align*} 
Now using the equivalence of $\|.\|_{Q_{h}}$ and $\|.\|_{h}$ on the finite dimensional space $V_{h}$, we get the following estimate 
\begin{align}\label{L2:estimate1}
\|v_{0h}\|_{Q_{h}}\leq C_{3}|e|^{\beta}\left(\|q\|_{H^{2+\beta}(\Omega)}+\|\nabla(\Delta q)\|+\|f\|+\left(h^{2-\beta}\|u-u_{d}\|\right)\right).
\end{align}
A use of triangle inequality along with \ref{L2:estimate1} and Theorem \ref{thm:EnergyEstimate} yields
\begin{align} \label{L2:estimate2}
\|u^{h}_{q}-q\|_{Q_{h}}\leq C_{4}|e|^{\beta} \left(\|q\|_{H^{2+\beta}(\Omega)}+\|f\|+\|\nabla(\Delta q)\|+|e|^{-\beta}\left(\sum_{T\in\mathcal{T}_{h}}h^{4}\|u-u_{d}\|^{2}_{T}\right)^{1/2}\right),
\end{align}
 and hence, using \eqref{po1} and \eqref{L2:estimate2}, we obtain
\begin{align}\label{L2:estimate3}
\|u^{h}_{q}-u_{q}\|\leq C_{5}h^{1+\beta}\left(\|q\|_{H^{2+\beta}(\Omega)}+\|f\|+\|\nabla(\Delta q)\|+h^{2-\beta}\|u-u_{d}\|\right).
\end{align}
The estimate for the second term of the right hand side of \eqref{5:14} follows in the same line as in \cite{CHOWDHURYDGcontrol2015} and hence skipped. 
In order to estimate the third term of the right hand side of \eqref{5:14} we note that using similar arguments as in \cite{CHOWDHURYDGcontrol2015} we obtain
\begin{align}\label{frp}
\|r\|_{H^{2+\beta}(\Omega)}\leq C\|q-q_{h}\|.
\end{align}
Next 
\begin{align*}
a_{h}(r-r_{h},q-q_{h})&
\leq C\|r-r_{h}\|_{Q_{h}}\|q-q_{h}\|_{Q_{h}}
\\
&\leq  C|e|^{2\beta}\left(\|q\|_{H^{2+\beta}(\Omega)}+\|f\|+\|\phi\|_{H^{3}(\Omega)}\right.\\ &\left.\quad+ h^{2-\beta}\|u-u_{d}\|\right)\left(\|r\|_{H^{2+\beta}(\Omega)}+\|\nabla(\Delta r)\|\right).
\end{align*}
Using the density of $C^{\infty}(\bar{\Omega})\times C^{\infty}(\bar{\Omega})$ in $H(div,\Omega)$ with respect to the natural norm induced on $H(div,\Omega)$, we find that
\begin{eqnarray*}
\int_{\Omega}\nabla(\Delta r)\cdot\nabla p\;dx+\int_{\Omega}\Delta^{2}r\,p\;dx=\Big\langle\frac{\partial \Delta r}{\partial n},p\Big\rangle_{-\frac{1}{2},\frac{1}{2},\partial\Omega}. 
\end{eqnarray*}
From \eqref{dual optimal control} and \eqref{5:10}, we obtain that
\begin{eqnarray}
\int_{\Omega}\nabla(\Delta r)\cdot\nabla p\;dx=\frac{1}{\alpha}\Big\langle\frac{\partial\Delta\xi}{\partial n},p\Big\rangle_{-\frac{1}{2},\frac{1}{2},\partial\Omega}\ \ \;\forall\, p\in H^{1}(\Omega).\label{3}
\end{eqnarray}
Note that by taking $p=1$ in \eqref{5:8}, we obtain $\int_{\Omega}\left(u_{r}-(q-q_{h})\right)\,dx=0$ and using \eqref{intbyptsa}, we conclude that \eqref{3} satisfies the compatibility condition. 
Taking $p=\Delta r-\frac{1}{|\Omega|}\int_{\Omega}\Delta r\;dx$ in \eqref{3} with a use of trace and Poincare-Friedrich's inequality, we find that
\begin{eqnarray}
\|\nabla(\Delta r)\|\leq C\Big\|\frac{\partial\Delta\xi}{\partial n}\Big\|_{H^{-1/2}(\partial\Omega)}.\label{9}
\end{eqnarray}
Using \eqref{intbyptsa}, we obtain 
\begin{eqnarray}
\Big\|\frac{\partial\Delta\xi}{\partial n}\Big\|_{H^{-1/2}(\partial\Omega)}\leq C\left(\|\Delta^{2}\xi\|+\|\nabla(\Delta\xi)\|\right)\label{trcest}.
\end{eqnarray}
Using the elliptic regularity theory, the solution of \eqref{eq:5:9} satisfies $\|\xi\|_{H^{3}(\Omega)}\leq C\|u_{r}-(q-q_{h})\|$ and $\Delta^{2}\xi=u_{r}-(q-q_{h})$. Now using \eqref{9} and \eqref{trcest}, we find that 
$\|\nabla(\Delta r)\| \leq C\|q-q_{h}\|$.
Therefore using \eqref{frp}, we have 
\begin{align}\label{latte}
a_h(r-r_{h},q-q_{h})\leq C h^{2\beta}\left(\|q\|_{H^{2+\beta}(\Omega)}+\|f\|+\|\phi\|_{H^{3}(\Omega)}+h^{2-\beta}\|u-u_{d}\|\right)\|q-q_{h}\|.
\end{align}
Using the same arguments, we obtain the following estimate
\begin{align}
a_{h}(\phi-\phi_{h},r_{h}-r)\leq Ch^{1+\beta}\left(\|q\|_{H^{2+\beta}(\Omega)}+\|f\|+\|\phi\|_{H^{3}(\Omega)}+h^{2-\beta}\|u-u_{d}\|\right)\|q-q_{h}\|.
\end{align}
Note that as $\phi\in H^{2}_{0}(\Omega),\;\phi_{h}\in H^{1}_{0}(\Omega),$ we obtain 
\begin{equation}
a_{h}(\phi-\phi_{h},r)=0.\label{5:24}
\end{equation}
The estimates of the remaining terms on the right hand side of \eqref{5:14} can be found using the similar arguments as in \cite{CHOWDHURYDGcontrol2015}. Therefore we are skipping them and the result follows.
\end{proof}

\section{Alternative Approach of Error Analysis}\label{sec:aa}
This section is devoted to the discussion of energy norm error estimate for the solution of fourth order linear elliptic equation with boundary condition of Cahn-Hilliard type \cite{BGGS2012CHC0IP} under minimal regularity assumption. For the sake of technical simplicity, the following equation is considered
\begin{align}\label{ch_eqn}
&\Delta ^2\psi=g_1\;\text{in}\; \Omega,\\
&\partial \psi/\partial n=0,\;\partial(\Delta \psi)/\partial n=g_2\; \text{on}\; \partial\Omega,\notag
\end{align}  
where $g_1\in L_{2}(\Omega)$ and $g_{2}\in L_{2}(\partial\Omega)$ with the compatibility condition $\int_{\Omega}g_1dx=\int_{\partial\Omega}g_2ds$ being satisfied. We know that the solution of \eqref{ch_eqn} is unique upto an additive constant \cite{BGGS2012CHC0IP}.
\indent We refer to \cite{BGGS2012CHC0IP} for the medius error analysis of the solution of \eqref{ch_eqn}. In this section we propose a new error analysis for the solution in energy norm. We begin by recalling the Hilbert space $Z$ defined in the proof of Lemma \ref{lemma:importance}.
The variational formulation of \eqref{ch_eqn} is to find $\psi\in Z$ such that
\begin{align}\label{weak_formulation}
a(\psi,p)=(g_1,p)-(g_{2},p)_{\partial\Omega}\ \ \;\forall \ p\in\;Z,
\end{align}
If we compare this variational formulation with the corresponding variational formulation $(1.1)$ of \cite{BGGS2012CHC0IP} then we note that the major difference is in the choice of test or admissible function space. This helps us to establish some special regularity result for the solution of \eqref{ch_eqn}, and it is one of the main novelties in this section:  
\begin{lemma}\label{regularity_result}
Let $\psi\in Z$ be the solution of (\ref{weak_formulation}). Then $\Delta \psi \in H^{1}(\Omega)$ and hence $\nabla(\Delta\psi)\in H(div,\Omega)$.
\end{lemma}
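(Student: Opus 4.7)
The plan is to follow the template of Lemma \ref{lemma:regularity}: identify $\Delta\psi$ with the weak solution of an auxiliary Poisson--Neumann problem up to an additive constant, which is exactly the claim that $\Delta\psi \in H^{1}(\Omega)$. Concretely, by Lax--Milgram on $H^{1}(\Omega)/\mathbb{R}$ (compare Lemma \ref{lemma:solution}), let $w\in H^{1}(\Omega)/\mathbb{R}$ be the unique weak solution of
\begin{align*}
(\nabla w,\nabla p)=(g_{2},p)_{\partial\Omega}-(g_{1},p)\quad\forall\, p\in H^{1}(\Omega);
\end{align*}
the compatibility condition $\int_{\Omega}g_{1}\,dx=\int_{\partial\Omega}g_{2}\,ds$ guarantees the right-hand side annihilates constants, so this is well posed.

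Next I would enlarge the class of admissible test functions in \eqref{weak_formulation} from $Q$ to the Hilbert space $Z$ introduced in Lemma \ref{lemma:importance}. Both sides of \eqref{weak_formulation} are continuous with respect to $\|\cdot\|_{X}$: the left by Cauchy--Schwarz on $\Delta\psi,\Delta p\in L_{2}(\Omega)$, and the right by the continuous trace $H^{1}(\Omega)\hookrightarrow L_{2}(\partial\Omega)$. Hence Lemma \ref{weak_density} extends the identity
\begin{align*}
(\Delta\psi,\Delta p)=(g_{1},p)-(g_{2},p)_{\partial\Omega}
\end{align*}
from $p\in Q$ to all $p\in Z$.

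For any $p\in Z$, $\nabla p\in H(\mathrm{div},\Omega)$ with normal trace $\partial p/\partial n=0$ on $\partial\Omega$, so the generalized Green's formula pairing $H^{1}(\Omega)$ against $H(\mathrm{div},\Omega)$ with vanishing normal trace gives $(\nabla w,\nabla p)=-(w,\Delta p)$. Combined with the defining equation for $w$ this yields $(w,\Delta p)=(g_{1},p)-(g_{2},p)_{\partial\Omega}$, and subtracting the two identities produces
\begin{align*}
(w-\Delta\psi,\Delta p)=0\quad\forall\, p\in Z.
\end{align*}
For any $f\in\tilde{L}_{2}(\Omega)$, the Neumann--Poisson problem $\Delta p=f,\ \partial p/\partial n=0$ admits a weak $H^{1}$ solution belonging to $Z$, so $\Delta(Z)=\tilde{L}_{2}(\Omega)$. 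Therefore $w-\Delta\psi$ lies in the orthogonal complement of $\tilde{L}_{2}(\Omega)$ in $L_{2}(\Omega)$, hence equals a constant almost everywhere, proving $\Delta\psi\in H^{1}(\Omega)$.

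For the second assertion, testing \eqref{weak_formulation} against $p\in\mathcal{D}(\Omega)\subset Q$ (whose boundary trace vanishes) yields $\langle\Delta^{2}\psi,p\rangle_{\mathcal{D}',\mathcal{D}}=(g_{1},p)$, so $\Delta^{2}\psi=g_{1}$ in $\mathcal{D}'(\Omega)$ and hence almost everywhere. Thus $\mathrm{div}(\nabla\Delta\psi)=g_{1}\in L_{2}(\Omega)$, which together with $\nabla(\Delta\psi)\in L_{2}(\Omega)^{2}$ gives $\nabla(\Delta\psi)\in H(\mathrm{div},\Omega)$. The main subtlety is the passage from $Q$ to $Z$ in the test space: working only with $p\in Q$ would require surjectivity of $\Delta:Q\to\tilde{L}_{2}(\Omega)$, which is not evident on a general (non-convex) polygonal domain since $H^{1}$ solutions of the Neumann problem need not be $H^{2}$; Lemma \ref{weak_density} is precisely what circumvents this obstruction.
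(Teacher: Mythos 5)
Your proposal is correct and follows essentially the same route as the paper: the same auxiliary Neumann problem for $w$, the same subtraction argument leading to $(w-\Delta\psi,\Delta p)=0$, and the same reliance on the density Lemma \ref{weak_density} to conclude $w-\Delta\psi$ is orthogonal to $\tilde{L}_{2}(\Omega)$, hence constant. You merely spell out details the paper leaves implicit (the extension of the test space from $Q$ to $Z$ by continuity, the surjectivity of $\Delta$ from $Z$ onto $\tilde{L}_{2}(\Omega)$, and the sign bookkeeping in the integration by parts), which is a faithful elaboration rather than a different argument.
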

\begin{proof}
\par
\noindent
Using Fredholm alternative theory, there exists a weak solution $w\in H^{1}(\Omega)$ of the following variational formulation
\begin{align*}
(\nabla w,\nabla p)=(g_{1},p)-(g_{2},p)_{\partial\Omega}\ \ \;\forall\  p\in H^{1}(\Omega).
\end{align*}
which is unique upto an additive constant.
\par
\noindent
Applying integration by parts to obtain
\begin{align}\label{eq:5:5'}
(w,\Delta p)=(g_{1},p)-(g_{2},p)_{\partial\Omega}\ \ \; \forall\  p\in Z.
\end{align}
From \eqref{weak_formulation} and \eqref{eq:5:5'}, we find that
\begin{align*}
(w-\Delta\psi,\Delta p)=0\ \ \;\forall\ p\in Z.
\end{align*}
Therefore $w-\Delta \psi$ belongs to the orthogonal complement of $L^0_{2}(\Omega)$.
Therefore $\Delta \psi=w+a$, where $a$ is some constant function. Hence, $\Delta\psi\in H^{1}(\Omega)$ and subsequently we obtain the desired result from \eqref{ch_eqn}, .
\end{proof}
Note that the solution of (\ref{ch_eqn}) is unique upto an additive constant. Let $\psi_{1}\in Z$ be a solution of (\ref{ch_eqn}) and $c$ be a corner point of $\Omega$. We define $\psi_{2}\in Z$ as
\begin{align*}
\psi_{2}(x)=\psi_{1}(x)-\psi_{1,c}(x)\ \ \;\forall\ x\in\bar{\Omega},
\end{align*}
where the the constant function $\psi_{1,c}(x)=\psi_{1}(c) \;\forall\ x\in\bar{\Omega}.$
Then $\psi_2(c)=0$ and hence $\psi_2$ satisfies (\ref{weak_formulation}). 
Define
\begin{align*}
Z^*=\{p\in Z: p(c)=0\}.
\end{align*} 
In this connection, consider the following variational problem of
finding $\psi_{2}'\in Z^*$ such that
\begin{align}\label{ws}
a(\psi_{2}',p)=(g_{1},p)-(g_2,p)_{\partial\Omega}\ \ \;\forall \ p \in Z^*.
\end{align}
Since $\psi_{2}\in Z^*$ satisfies (\ref{ws}), $\psi_{2}=\psi_{2}'$ holds by the uniqueness of the solution to (\ref{ws}) \cite{BGGS2012CHC0IP}. Therefore, $\psi_{2}'$ posses the regularity property described in Lemma \ref{regularity_result}.

By an application of Lemma \ref{regularity_result}, we find that $\Delta \psi_{2}\in H^{1}(\Omega)$ and $\nabla(\Delta \psi_{2})\in H(div,\Omega)$ which implies
\begin{eqnarray}\label{dot}
\int_{\Omega}\nabla(\Delta)\psi_{2}\cdot\nabla\phi\;dx+\int_{\Omega}\Delta^{2}\,\psi_{2}\phi\;dx=\Big\langle
\frac{\partial\Delta\psi_{2}}{\partial
n},\phi\Big\rangle_{-\frac{1}{2},\frac{1}{2},\Omega}\ \ \;\forall\,
\phi\in H^{1}(\Omega).
\end{eqnarray}
We consider the finite element space $Z_{h}$ to be the same as in \cite{BGGS2012CHC0IP}. Taking $p_{h}\in Z_{h}^{*}$ in \eqref{dot}, applying triangle wise integration by parts and \eqref{ch_eqn} we obtain 
%
%
%
\begin{eqnarray}\label{cc}
a_h(\psi_{2},p_h)=(g_1,p_h)-(g_2,p_h)_{\partial\Omega}\ \ \; \forall\ p_h\in Z_{h}^{*},\label{consis}
\end{eqnarray}
where $Z_{h}^{*}$ is defined by
\begin{align*}
Z_{h}^{*}=\{p_{h}\in Z_{h}:p_{h}(c)=0\}.
\end{align*}
In this direction, we consider the following discrete problem:
Find $\psi_{h}\in Z_{h}^{*}$ such that
\begin{align}
a_h(\psi_{h},p_h)=(g_1,p_h)-(g_2,p_h)_{\partial\Omega}\ \ \; \forall\ p_h\in Z_{h}^{*}.\label{dp3}
\end{align}
Now we state and prove the main result of this section which gives the error estimate for the solutions of \eqref{ws} and \eqref{dp3} in the energy norm.
\begin{theorem}
Let $\eta$ and $\eta_h$ be the solutions of \eqref{ws} and \eqref{dp3} respectively. Then the following optimal order error estimate holds
\begin{eqnarray*}
\|\eta-\eta_{h}\|_{h}\leq C \inf_{v_{h}\in Z_{h}^{*}}\|\eta-v_{h}\|_{Q_{h}}.
\end{eqnarray*}
\end{theorem}
\begin{proof}
By Lax-Milgram lemma, there exists an unique solution of \eqref{dp3}.
From \eqref{consis} and \eqref{dp3}, we obtain the following Galerkin orthogonality condition
\begin{align}
a_h(\eta-\eta_{h},p_h) = 0\ \ \; \forall\ p_{h}\in Z_{h}^{*}.\label{go}
\end{align}
Let $v_{h}\in Z^{*}_{h}$ be arbitrary. Then using \eqref{go} we find that
\begin{equation}\label{fi}
\begin{split}
    \|v_{h}-\eta_{h}\|_{h}&=\sup_{\phi_{h}\in Z_{h}^{*},\phi_{h}\neq 0}\frac{a_{h}(v_{h}-\eta_{h},\phi_{h})}{\|\phi_{h}\|_{h}}\\
    &=\sup_{\phi_{h}\in Z_{h}^{*},\phi_{h}\neq 0}\frac{a_{h}(v_{h}-\eta,\phi_{h})}{\|\phi_{h}\|_{h}}\leq C\|\eta-v_{h}\|_{Q_{h}}.
\end{split}
\end{equation}
A use of triangle inequality along with \eqref{fi} completes the rest of the proof.
\end{proof}
\begin{remark}
	The above Theorem provides us the optimal order error estimate for the solution of \eqref{ch_eqn} under minimal regularity assumption. Say the solution $\eta$ has $2+\epsilon$ ($0\leq\epsilon\leq 1$) regularity, then in the above Theorem if we choose $v_{h}=I_{h}\eta$ ($I_{h}\eta\in Z_{h}^{*}$ is the standard piecewise quadratic Lagrange interpolation of $\eta.$) then with the help of Lemma \ref{regularity_result}, triangle dependent trace inequality and standard interpolation error estimates we obtain the optimal order error estimate of order $\epsilon$. 
\end{remark}
\section{ Numerical Examples}\label{sec:Numerical Examples}
In this section, we verify the theoretical findings by
conducting a numerical example. In the example, we validate the {\em a priori} error estimates derived in the $L_{2}$-norm and energy norm established in Theorem \ref{thm:EnergyEstimate}, Theorem \ref{theorem:State_andAdjointState} and Theorem
\ref{thm:L2estimate}. The MATLAB software has been used for all the computations. To this end, we construct the model problem with known
solution. For the ease of constructing numerical example with known solution, we modify the model problem by adding an {\em a priori} control $p_d \in Q$ in the cost functional $J(\cdot,\cdot)$. The modified optimal control problem reads as:
\begin{eqnarray*}
\min_{p\in Q}\tilde{J}(u,p):=\frac{1}{2}||u-u_{d}||^{2}+\frac{\alpha}{2}|p-p_{d}|_{H^{2}(\Omega)}^{2},
\end{eqnarray*}
subject to the condition that $(u,p)\in Q\times Q$ such that $u$ is the weak solution of \eqref{b2}. The first order optimality system takes the form:
\begin{align*}
u&=u_{f}+p,\\
a(u_f,v)&=(f,v)-a(p,v)\ \ \ \forall \ v\in V,\\
a(\phi,v)&=(u-u_{d},v) \ \ \ \forall\ v\in V,\\
\alpha a(p,\tilde{p})&=a(\phi,\tilde{p})-(u-u_{d},\tilde{p})+\alpha a(p_d,\tilde{p})\ \ \ \forall\ \tilde{p}\in Q.
\end{align*}
Similarly, we can write the discrete optimality system as well. 
\begin{example}
In this example, we consider the domain as $\Omega =(0,1)\times(0,1)$ together with the following data: 
\begin{align*}
    u(x,y)&=sin^{2}\pi x\,sin^{2}\pi y+cos\pi x\,cos\pi y,\\
    \phi(x,y)&=sin^{4}\pi x\,sin^{4}\pi y,\\
    p(x,y)&=cos\pi x\,cos\pi y,\ \ u_{d}(x,y) = u(x,y) - \Delta^{2}\phi(x,y),\\
    f(x,y)&=\Delta^{2}u(x,y), \ \ p_{d}(x,y)=p(x,y),\ \ \alpha= 1.
\end{align*}
 The mesh is refined uniformly to confirm  a priori convergence order. The computed errors and orders of convergence in the energy norm and $L_{2}$-norm for all the variables are shown in
 Table \ref{table:H2Y1} and Table \ref{3table:H2Y2}, respectively.
 The example clearly shows the expected rates of convergence. Comparison of the plots of the exact and discrete control, adjoint state and state are shown in the Figures \ref{fig5}, \ref{fig6}
and \ref{fig7} respectively.
\end{example}
\begin{table}[H]
\begin{center}
\begin{tabular}{|c|c|c|c|c|c|c|c|}\hline
 $h$ & $\|u-u_h\|_h $  & order &  $\|\phi-\phi_h\|_h$ & order & $\|q-q_h\|_{h}$ & order\\
\hline\\[-12pt]
 1/4   & 8.33697  &  --     &  13.4214  &   --     &  5.00562 &  --  \\
 1/8   & 4.07916   & 1.0312  &  7.05252  &   0.9283 &  1.78224 & 1.4899     \\
 1/16  & 2.01182   & 1.0198  &  3.74453  &   0.9134 &  0.88300 & 1.0132 \\
 1/32  & 0.98034    & 1.0371  &  1.79272  &   1.0626 &  0.42386 & 1.0588   \\
 1/64  & 0.48293    & 1.0215  &  0.85754  &   1.0639 &  0.20550 & 1.0445 \\
 1/128 & 0.23997    & 1.0089  &  0.41981  &   1.0305  &  0.10162 & 1.0159  \\
\hline
\end{tabular}
\end{center}

\par\medskip
\caption{Errors and orders of convergence in energy norm.}
\label{table:H2Y1}
\end{table}

\begin{table}[ht]
\begin{center}
\begin{tabular}{|c|c|c|c|c|c|c|c|}\hline
 $h$ & $\|u-u_h\| $  & order &  $\|\phi-\phi_h\|$ & order & $\|q-q_h\|$ & order\\
\hline\\[-12pt]
 1/4   & 519.515  &  --     &  0.39291  &   --     &  519.610 &  --  \\
 1/8   & 0.04198   & 13.595  &  0.05212  &   2.9144 &  0.04101 & 13.629     \\
 1/16  & 0.01239   & 1.7604  &  0.01805  &   1.5295 &  0.01336 & 1.6186 \\
 1/32  & 0.00334    & 1.8929  &  0.00528  &   1.7732 &  0.00379 & 1.8139   \\
 1/64  & 0.00086    & 1.9561  &  0.00141  &   1.9076 &  0.00099 & 1.9299 \\
 1/128 & 0.00022    & 1.9834  &  0.00036  &   1.9665  &  0.00025 & 1.9756  \\
\hline
\end{tabular}
\end{center}
\par\medskip
\caption{Errors and orders of convergence in  $L_{2}$-norm.}
\label{3table:H2Y2}
\end{table}
\section{Conclusion}\label{sec:Conclusions3}
In this article, we have derived  the $L_{2}$-norm error estimate for the solution of a Dirichlet boundary control problem on more general domain than the one was studied in \cite{CHOWDHURYDGcontrol2015}. Additionally getting motivated from the technique of deriving an additional regularity result for the optimal control (Lemma \ref{lemma:regularity}),  we have proposed an alternative approach for the error analysis of biharmonic equation of Cahn-Hilliard type boundary condition under minimal regularity assumption. In order to prove these results, we have derived an equality of two well known bilinear forms arising in the context of weak formulation of biharmonic equations and a density result which may be of theoretical interest.
\section{Acknowledgements}\label{sec:Ack}
We thank Prof. Neela Nataraj and Dr. Mayukh Mukherjee for their valuable suggestions and cooperation.


 
\end{document}